\documentclass[12pt,reqno]{amsart}
\usepackage[T1]{fontenc}
\usepackage[utf8]{inputenc}
\usepackage{amssymb,latexsym,amsthm,amsfonts,amscd,pgf,enumerate,ragged2e}
\usepackage{amsmath}
\usepackage{pstricks-add}
\usepackage{graphics,pstricks}
\usepackage{pgf,tikz}
\usepackage{mathrsfs}
\usepackage{fullpage}
\usepackage{microtype,textcomp,csquotes, lmodern}

\usetikzlibrary{arrows}
\usepackage{hyperref}
\usepackage{cleveref}




\newtheorem{proposition}{Proposition}[section]
\newtheorem{lemma}[proposition]{Lemma}
\newtheorem{theorem}[proposition]{Theorem}
\newtheorem{corollary}[proposition]{Corollary}
\newtheorem{question}[proposition]{Question}
\newtheorem{conjecture}[proposition]{Conjecture}
\theoremstyle{definition}
\newtheorem{remark}[proposition]{Remark}

\newtheorem{example}[proposition]{Example}
\newtheorem{definition}[proposition]{Definition}


\newcommand{\Indr}[1]{{\rm Ind}_{#1}}


\tikzstyle{place}=[draw,circle,minimum size=1mm,inner sep=1pt,outer sep=-1.1pt,fill=black]

\usetikzlibrary{shapes.geometric}
\tikzstyle{places}=[draw,rectangle,minimum size=8pt,inner sep=0pt]
\tikzstyle{placesf}=[draw,rectangle,minimum size=5pt,inner sep=0pt]
\tikzstyle{placec}=[draw,circle,minimum size=8pt,inner sep=0pt]
\tikzstyle{placecf}=[draw,circle, minimum size=5pt,inner sep=0pt]

\begin{document}


\title{Fr\"oberg's Theorem, vertex splittability and higher independence complexes}

\author{Priyavrat Deshpande}
\address{Chennai Mathematical Institute, India}
\email{pdeshpande@cmi.ac.in}
\thanks{}

\author{Amit Roy}
\address{Chennai Mathematical Institute, India}
\email{amitiisermohali493@gmail.com}
\thanks{}

\author{Anurag Singh}
\address{Indian Institute of Technology Bhilai, India}
\email{asinghiitg@gmail.com}
\thanks{}

\author{Adam Van Tuyl}
\address{Department of Mathematics \& Statistics, McMaster University, Hamilton, ON L8S 4L8, Canada}
\email{vantuyl@math.mcmaster.ca}
\thanks{}

\date{March 09, 2024}

\keywords{Independence complex, Stanley-Reisner ideal, edge ideal, linear resolution, vertex splittable, collapsible complex}
\subjclass[2010]{13F55, 05E45}

\begin{abstract}
A celebrated theorem of Fr\"oberg gives a complete combinatorial classification of quadratic square-free monomial ideals with a linear resolution. 
A generalization of this theorem to higher degree square-free monomial ideals is an active area of research. 
The existence of a linear resolution of such ideals often depends on the field over which the polynomial ring is defined. 
Hence, it is too much to expect that in the higher degree case a linear resolution can be identified purely using a combinatorial feature of an associated combinatorial structure. 
However, some classes of ideals having linear resolutions have been identified using combinatorial structures. 
In the present paper, we use the notion of $r$-independence to construct an $r$-uniform hypergraph from the given graph. 
We then show that when the underlying graph is co-chordal, the corresponding edge ideal is vertex splittable, a condition stronger than having a linear resolution. 
We use this result to explicitly compute graded Betti numbers for various graph classes. 
Finally, we give a different proof for the existence of a linear resolution using the topological notion of $r$-collapsibility. 

\end{abstract}

\maketitle


\section{Introduction}

Let $G$ be a finite simple graph with $V(G)=\{x_1,\ldots, x_n\}$ as its vertex set and $E(G)$ be its edge set. 
The independence complex of $G$, denoted by $\mathrm{Ind}(G)$, is the simplicial complex whose simplices are independent subsets of vertices in $G$. The
complex ${\rm Ind}(G)$ is an important object in combinatorics which lies at the crossroads of various fields of mathematics and computer science. 
For example, via {\it edge ideals}, a concept introduced by Villarreal \cite{Vill}, the independence complex appears in commutative algebra. 
In particular, let $R=\mathbb K[x_1,\ldots,x_n]$ be the polynomial ring in $n$ variables over a field $\mathbb K$. 
Then the edge ideal $I(G)$ of $G$ is the quadratic square-free monomial ideal $\langle x_ix_j\mid \{x_i,x_j\}\in E(G)\rangle$ of $R$. 
The ideal $I(G)$ is also the {\it Stanley-Reisner ideal} of $\mathrm{Ind}(G)$. 
Determining algebraic and homological properties of the ideal $I(G)$ in terms of the combinatorial properties of $\mathrm{Ind}(G)$ is an active area of research in commutative algebra. 

In 2018 Paolini and Salvetti \cite{PS} considered a generalisation of the independence complex in the context of braid groups, called the {\it $r$-independence complex} $\mathrm{Ind}_r(G)$ of $G$, for any positive integer $r$. 
A subset $A\subseteq V(G)$ is called $r$-independent if each connected component of the induced subgraph $G[A]$ has at most $r$ vertices. 
The collection of all $r$-independent sets forms the simplicial complex $\mathrm{Ind}_r(G)$. 
Note that $\mathrm{Ind}_1(G)$ is the independence complex of $G$.

The main focus of Paolini and Salvetti \cite{PS} was to understand twisted (co)homology groups of the classical braid groups via relating them to that of  $r$-independence complexes of certain graphs. 
Their results indicated that these complexes are interesting in their own right.
Later, it was proved in \cite{DS} that the $r$-independence complexes of cycle graphs and perfect $m$-ary trees are homotopy equivalent to a wedge of spheres. 
Extending a result of Meshulam \cite{Meshulam}, it was also proved in \cite{DSS} that the (homological) connectivity of $r$-independence complexes of graphs gives an upper bound for the distance $r$-domination number of graphs. 
In the same paper the authors proved that the
$r$-independence complexes of chordal graphs are homotopy equivalent to a wedge of spheres for all $r\ge 1$. 
From the perspective of the Cohen-Macaulay property, it was shown in \cite{FMA} that the $r$-independence complexes of trees are shellable. 
Moreover, it was also proved, using commutative algebra techniques, that the $r$-independence complexes of caterpillar graphs are vertex decomposable, 
a property that implies the Cohen-Macaulay property.

In this article we focus on developing the algebraic properties of the $r$-independence complexes. 
In particular, we focus on various algebraic and homological invariants of the Stanley-Reisner ideal $I_r(G)$ of $\mathrm{Ind}_r(G)$. 
The ideal $I_r(G)$ can also be viewed as the hyperedge ideal of a hypergraph associated to certain subgraphs of $G$.  Specifically, 
let $\mathrm{Con}_r(G)$ denote the hypergraph with the vertex set $V(G)$ and the hyperedges are those $r+1$-subsets $W$ such that the induced subgraph $G[W]$ is connected. 
Then the Stanley-Reisner ideal of $\mathrm{Ind}_r(G)$ is same as the edge ideal of $\mathrm{Con}_r(G)$. 
Note that $\mathrm{Con}_1(G)=G$. Thus $I_1(G)$ is nothing but the usual {\it edge ideal} of $G$. 
Moreover, if $G=K_n$, the complete graph, or if $G=K_{n_1,\ldots,n_t}$, the complete multipartite graph, then $\mathrm{Con}_r(G)$ matches with the complete hypergraph $K_n^{r+1}$ and a $(r+1)$-complete multipartite hypergraph $K_{n_1,\ldots,n_t}^{r+1}$ defined by Emtander in \cite{Emtander}.  
 
In 1988 Fr\"{o}berg \cite{RF} showed that the complement of a graph is chordal if and only if the Stanley-Reisner ideal of its independence complex has a linear free resolution. 
We are interested in determining whether or not the Stanley-Reisner ideal of the $r$-independence complex of a graph has a linear free resolution, given that the complement of the graph is chordal. 
We show that this is indeed true, thus giving a partial generalization 
of Fr\"oberg's result:

\begin{theorem}\label{maintheorem}
    If $G$ is a graph whose complement is
    chordal, then $I_r(G)$ has a $(r+1)$-linear resolution.
\end{theorem}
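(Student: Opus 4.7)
The plan is to prove the stronger statement that $I_r(G)$ is vertex splittable whenever $\bar G$ is chordal; since vertex splittable monomial ideals generated in a single degree $d$ admit a $d$-linear resolution by a theorem of Moradi and Khosh-Ahang, this establishes Theorem~\ref{maintheorem}. The argument proceeds by induction on $n = |V(G)|$, with trivial base cases when $n \le r+1$ (where $I_r(G)$ is either zero or principal).

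For the inductive step I would invoke the fact that a chordal graph has a simplicial vertex: let $v$ be a simplicial vertex of $\bar G$, so that $N_{\bar G}(v)$ is a clique in $\bar G$ and hence an independent set in $G$. Decompose
\[
I_r(G) \;=\; x_v\, J + K,
\]
where $K$ is generated by those minimal generators of $I_r(G)$ not divisible by $x_v$, equivalently $K = I_r(G-v)$, and $J = (I_r(G) : x_v)$ is generated by the monomials $\prod_{u \in W} x_u$ with $W \subseteq V(G) \setminus \{v\}$, $|W| = r$, and $G[\{v\} \cup W]$ connected. To derive vertex splittability of $I_r(G)$ from this decomposition, I must verify (i) $K$ is vertex splittable, (ii) $K \subseteq J$, and (iii) $J$ is vertex splittable.

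Parts (i) and (ii) should be handled quickly. For (i), $\overline{G-v}$ is still chordal, so $K=I_r(G-v)$ is vertex splittable by the inductive hypothesis. For (ii), take a connected $(r+1)$-subset $W' \subseteq V(G) \setminus \{v\}$; since $N_{\bar G}(v)$ is independent in $G$ while $G[W']$ contains edges, $W'$ must meet $N_G(v)$ at some vertex $u$. Choosing a spanning tree $T$ of $G[W']$ and a leaf $w$ of $T$ with $w \ne u$ (which exists because $T$ has at least two leaves), the subset $W' \setminus \{w\}$ is connected in $G$ and still contains $u$, so $G[\{v\} \cup (W' \setminus \{w\})]$ is connected. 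The corresponding $\prod_{u' \in W' \setminus \{w\}} x_{u'}$ then lies in $\mathcal{G}(J)$ and divides the generator of $K$ indexed by $W'$.

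The main obstacle is (iii). Writing $W = W_1 \sqcup W_2$ with $W_1 \subseteq N_G(v)$ and $W_2 \subseteq N_{\bar G}(v)$, the generators of $J$ correspond exactly to such $W$ with $W_1 \ne \emptyset$ and each $u \in W_2$ having at least one neighbor in $W_1$ (using that $N_{\bar G}(v)$ is $G$-independent, so $W_2$ contributes no internal edges and no vertex of $W_2$ is adjacent to $v$). This ``domination'' condition prevents $J$ from coinciding with $I_{r'}(G')$ for a smaller co-chordal $G'$, so the inductive hypothesis cannot be applied directly. My strategy would be to split $J$ a second time, at a vertex $u \in N_G(v)$ chosen to be simplicial in $\bar{G}$ after $v$ in a perfect elimination order of $\bar G$; the colon and deletion ideals of this second split should be manageable by induction on $n$ or $r$, exploiting the chordal elimination order to ensure that all auxiliary ideals inherit a co-chordal/domination structure that keeps us within the class of vertex splittable ideals.
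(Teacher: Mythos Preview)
Your overall strategy---prove vertex splittability and invoke Moradi--Khosh-Ahang---matches the paper exactly, and your treatment of (i) and (ii) is fine (your spanning-tree argument for (ii) is a clean variant of the paper's case split on whether $W'$ meets $N_{\bar G}(v)$).

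The gap is in (iii). Your assertion that the domination description of $J$ ``prevents $J$ from coinciding with $I_{r'}(G')$ for a smaller co-chordal $G'$'' is false, and this is precisely the idea you are missing. Define $\widetilde G$ on $V(G)\setminus\{v\}$ by keeping all edges of $G-v$ and adding every missing edge inside $N_G(v)$, so that $N_G(v)$ becomes a clique in $\widetilde G$ while $N_{\bar G}(v)$ remains independent. Then an $r$-subset $W=W_1\sqcup W_2$ (with $W_1\subseteq N_G(v)$, $W_2\subseteq N_{\bar G}(v)$) satisfies your condition ``$W_1\ne\emptyset$ and every $u\in W_2$ has a $G$-neighbour in $W_1$'' if and only if $\widetilde G[W]$ is connected. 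Hence $J=I_{r-1}(\widetilde G)$ on the nose. Moreover $\widetilde G^{\,c}$ has vertex set partitioned into a clique $N_{\bar G}(v)$ and an independent set $N_G(v)$, which is easily seen to be chordal; so $\widetilde G$ is co-chordal and the inductive hypothesis (now on $r$, not just $n$) applies directly. This yields a clean double induction on $(n,r)$ with no need for a second ad hoc split.

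Your proposed workaround---splitting $J$ again at the next simplicial vertex $u$ of $\bar G$---is not obviously wrong, but you give no argument that the resulting colon and deletion ideals stay within a class you can handle, and the sketch ``should be manageable'' does not constitute a proof. The construction of $\widetilde G$ is the missing key lemma.
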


\noindent
In fact, we give two different proofs. One proof is by using the notion of vertex splittable ideals from commutative algebra (see Corollary \ref{cor.froberg1}) and another one is by showing that the complex is $r$-collapsible, which is a key concept in topological combinatorics (see Theorem \ref{theorem:chordal collapsing}).  Note that the converse of Theorem \ref{maintheorem} does
not hold for $r \geq 2$; for more, see Corollary \ref{cor.conversefalse}.

One of the most useful ways to study the structure of a module is by analyzing the minimal free resolution of the module. 
Important numerical invariants of the free resolution are its graded Betti numbers. 
Using the fact that  vertex splittable ideals admit a {recursive formula for
the graded Betti numbers}, we are able to provide explicit formulas for the $\mathbb N$-graded Betti numbers of { $I_r(G)$ for various
families of graphs.}

This paper is organised as follows.  In \Cref{preliminaries} we recall the
relevant graph theory and Stanley-Reisner theory.  In \Cref{vertex spllitable}, we
use the notion of vertex splittable ideals to give our first proof
of Theorem \ref{maintheorem}. In \Cref{Betti numbers}  we deduce results
about the graded Betti numbers of $I_r(G)$ for some families of graphs
$G$. 
In \Cref{collapsibility} we provide an alternative proof to Theorem \ref{maintheorem}
that uses tools from topological combinatorics.
Finally, in \Cref{conrem} we outline some questions for future research. 

\medskip 
\noindent
{\bf Acknowledgements.}  
The authors would like to thank the referee for a quick and careful reading and for some helpful suggestions; especially for suggesting to use the Hilbert series for computations of Betti numbers in \Cref{Betti numbers} of the paper. Priyavrat Deshpande and Amit Roy are partially supported by a grant from the Infosys Foundation. 
Anurag Singh is partially supported by the Start-up Research Grant SRG/2022/000314 from SERB, DST, India. Van Tuyl’s research is partially
supported by NSERC Discovery Grant 2019-05412. 
 

\section{Stanley-Reisner ideals of higher independence complexes}\label{preliminaries}

In this section we recall some relevant results
from graph theory and Stanley-Reisner theory.  
In particular, we state some basic properties of Stanley-Reisner ideals of higher independence complexes.  
For more on Stanley-Reisner theory and monomial ideals, see \cite{HH,VBook}.

\subsection{Graph terminology} 
Throughout this paper, $G = (V(G),E(G))$ denotes a finite
simple graph on the vertex set $V(G)= \{x_1,\ldots,x_n\}$
and the set $E(G)$ of edges which is a collection of $2$-element subsets of $V(G)$.
If $x$ is a vertex of $G$, then $|\{y\in V(G)\mid \{x,y\}\in E(G)\}|$ is called the {\it degree} of $x$ in $G$, and is denoted by $\deg (x)$. If $\deg(x)=1$, then $x$ is called a leaf of $G$.
For $x\in V(G)$, $G\setminus x$ denotes the graph with vertex set $V(G)\setminus\{x\}$ and edge set $\{\{u,v\}\in E(G)\mid x\notin\{u,v\}\}$. 
The complement $G^c$ of $G$ is a graph with vertex set $V(G^c)=V(G)$ and $E(G^c)=\{\{x,y\}\mid \{x,y\}\notin E(G)\}$. 
The {\it neighbourhood} of $x$ in $G$ is defined as $N_G(x):=\{y\in V(G)\mid \{x,y\}\in E(G)\}$. 
The set $N_G(x)\cup\{x\}$ is called the {\it closed neighbourhood} of $x$ in $G$, and is denoted by $N_G[x]$. 
A {\it connected component} of $G$ is a (maximal) subgraph of $G$ such that for every pair of vertices in the subgraph,
there is a path within the subgraph that connects these
vertices. 
If $G$ is disconnected and $\mathcal{C}_1,\mathcal C_2,\ldots,\mathcal C_k$ are the connected components of $G$, then for each $i$, $V(\mathcal C_i)$ denotes the vertex set of the connected component $\mathcal C_i$.

If $G = (V(G),E(G))$ is a graph and $A \subseteq V(G)$, then
the {\it induced subgraph of $G$ on $A$}, denoted
$G[A]$, is the graph with vertex set $V(G[A]) = A$ and edge
set $E(G[A]) = \{e \in E(G) ~|~ e \subseteq A\}$. 
Let $A=\{x_{i_1},\ldots,x_{i_k}\}\subseteq V(G)$ be such that $G[A]\cong C_k$, a cycle of length $k$. 
If $E(G[A])=\{\{x_{i_j},x_{i_{j+1}}\}\mid 1\le j\le k-1\}\cup\{x_{i_k},x_{i_1}\}$, then we simply write the cycle $G[A]$ as $x_{i_1}\cdots x_{i_k}$.

A subset $W \subseteq V(G)$ is called an {\it $r$-independent set} of $G$ if each connected component of $G[W]$ has at most $r$ vertices. Note that a $1$-independent set is the usual {\it independent set} in a graph $G$.
We call an $r$-independent set $W$ a {\it maximal 
$r$-independent set} if $W$ is maximal with respect to inclusion among all
$r$-independent sets.


\subsection{Simplicial Complexes}
Fix a set of vertices $V = \{x_1,\ldots,x_n\}$.  A
{\it simplicial complex} $\Delta$ on $V$ is a subset
of $2^V$, that satisfies the
properties that $\{x_i\} \in \Delta$ for all
$i=1,\ldots,n$ and if $F \in \Delta$ and $G \subseteq
F$, then $G \in \Delta$. If $\Delta=2^V$, then $\Delta$ is called a {\it simplex} (or
$(n-1)$-simplex if we want to highlight
the number of vertices). An element $F \in \Delta$ is called a {\it face} of the simplicial complex $\Delta$.  
A face in $\Delta$ that is maximal with respect
to inclusion is called a {\it facet}.  If
$\{F_1,\ldots,F_s\}$ is a complete list of the
facets of $\Delta$, then we sometimes write
$\Delta = \langle F_1,\ldots,F_s \rangle$ and say that $\Delta$ is generated by $F_1,\ldots,F_s$.  The
{\it dimension} of a face $F$ is $\dim F = |F|-1$,
while the {\it dimension} of a simplicial
complex $\Delta$ is $\dim \Delta = \max\{ \dim F ~|~ F \in \Delta\}$. A simplicial complex is said to be {\it pure} if $\dim F = \dim \Delta$ for all facets of $\Delta$. If a simplex has dimension $d$, then it is called a $d$-simplex. For $k\le d$, the $k$-skeleton of the $d$-simplex $\Delta$ on the vertex set $Y=\{x_1,\ldots,x_{d+1}\}$ is the collection of all subsets of $Y$ which have cardinality at most $k+1$.

Given a simplicial complex $\Delta$ on the vertex set $V=\{x_1,\ldots,x_n\}$ we can associate it with a square-free
monomial ideal $I_{\Delta}$ in the polynomial ring 
$R =\mathbb K[x_1,\ldots,x_n]$ over the field $\mathbb K$ in the following way. For $A \subseteq V$, we write 
$$\mathbf x_A = \prod_{x_i \in A} x_i,$$
to denote the monomial obtain by multiplying together all the variables corresponding to the vertices in $A$. Then the ideal 
\[
I_\Delta = \langle \mathbf x_A ~|~ \mbox{$A \subseteq V$ and
$A \not\in \Delta$} \rangle
\]
is called the Stanley-Reisner ideal of $\Delta$.
The ideal $I_\Delta$ and the ring $R/I_\Delta$ (sometimes
called the Stanley-Reisner ring) captures invariants
of the simplicial complex.  For example, the 
Krull dimension of $R/I_\Delta$ satisfies
\begin{equation}\label{dim}
\mbox{K-dim}(R/I_\Delta) = \dim \Delta +1
\end{equation}
Expanding this dictionary between the algebraic
invariants of $R/I_\Delta$ and $\Delta$ when
$\Delta$ is a simplicial complex constructed from a
graph is the focus for the remainder of the paper.

\subsection{Stanley-Reisner ideals of higher independence complexes}

We now formally introduce the higher independence complexes of a graph $G$, the main object of study in this paper.

\begin{definition}\label{defn.rindcplx}
Let $G = (V(G),E(G))$ be a finite simple graph with
$V(G) = \{x_1,\ldots,x_n\}$, and let $r$ be any positive integer. Then the
{\it $r$-independence complex} of $G$,
denoted by ${\rm Ind}_r(G)$,
is the simplicial complex 
\[
{\rm Ind}_r(G) = \{ W \subseteq V(G) ~|~
\mbox{$W$ is an $r$-independent set} \}
\]
on the vertex set $V(G)$.
\end{definition}

When $r=1$, ${\rm Ind}_1(G)$ is the independence complex of $G$. 
If $r\ge n$, then ${\rm Ind}_r(G) = \langle \{x_1,x_2,\ldots,x_n\} \rangle$ since $V(G)$ is the $r$-independent set. 
For $r=n-1$, if $G$ is disconnected then $\mathrm{Ind}_{n-1}(G)$ is the $(n-1)$-simplex $\langle \{x_1,x_2,\ldots,x_n\} \rangle$ and if $G$ is connected then $\mathrm{Ind}_{n-1}(G)$ is the $(n-2)$-skeleton of the simplex $\langle \{x_1,x_2,\ldots,x_n\}\rangle$.

We now describe the Stanley-Reisner ideal of ${\rm Ind}_r(G)$.

\begin{theorem}
Let $G = (V(G),E(G))$ be a finite simple graph with
$V(G) = \{x_1,\ldots,x_n\}$.
Then the Stanley-Reisner ideal of ${\rm Ind}_r(G)$ 
in $R = \mathbb{K}[x_1,\ldots,x_n]$ is
$$I_{{\rm Ind}_r(G)} = \langle \mathbf x_A ~|~ \mbox{$A \subseteq V(G)$ with
$|A|=r+1$ and $G[A]$ connected}~\rangle.$$
\end{theorem}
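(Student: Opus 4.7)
The plan is to unpack the definition of the Stanley-Reisner ideal, translate the statement into one about (minimal) non-faces of $\mathrm{Ind}_r(G)$, and then identify those non-faces concretely using the definition of an $r$-independent set. By construction, $I_{\mathrm{Ind}_r(G)} = \langle \mathbf{x}_A \mid A \subseteq V(G),\ A \notin \mathrm{Ind}_r(G)\rangle$, and it suffices to exhibit a generating set consisting of the $\mathbf{x}_A$ with $|A|=r+1$ and $G[A]$ connected. I will prove the two inclusions separately.

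For the easy inclusion ($\supseteq$), I take an $(r+1)$-subset $A$ with $G[A]$ connected. Then $G[A]$ itself is a connected component of $G[A]$ of size $r+1 > r$, so $A$ fails to be $r$-independent. Hence $A \notin \mathrm{Ind}_r(G)$, and $\mathbf{x}_A$ lies in $I_{\mathrm{Ind}_r(G)}$.

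For the other inclusion ($\subseteq$), it suffices to show that every non-face $B$ of $\mathrm{Ind}_r(G)$ contains an $(r+1)$-subset $A$ with $G[A]$ connected, so that $\mathbf{x}_A$ divides $\mathbf{x}_B$. If $B \notin \mathrm{Ind}_r(G)$, then by the definition of $r$-independence some connected component $\mathcal{C}$ of $G[B]$ has $|V(\mathcal{C})| \geq r+1$. The key combinatorial step is then to extract from $\mathcal{C}$ a connected induced subgraph on exactly $r+1$ vertices. I would do this by choosing a spanning tree $T$ of $\mathcal{C}$ and iteratively removing leaves until the resulting tree $T'$ has exactly $r+1$ vertices; since $T'$ is connected and its edges lie in $G$, the induced subgraph $G[V(T')]$ contains $T'$ and is therefore connected. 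Setting $A = V(T') \subseteq B$ gives the required $(r+1)$-subset, which finishes the proof.

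The argument is essentially routine, so there is no real obstacle; the only point worth care is the clean reduction from an arbitrary connected graph on $\geq r+1$ vertices to an induced connected subgraph on exactly $r+1$ vertices (as opposed to merely a connected subgraph, which is a priori weaker). The spanning-tree-and-prune-leaves trick handles this cleanly. Note that in the course of the proof one actually obtains the stronger statement that the displayed $\mathbf{x}_A$'s with $|A|=r+1$ and $G[A]$ connected form the \emph{minimal} monomial generating set of $I_{\mathrm{Ind}_r(G)}$, since no proper subset of such an $A$ is a non-face.
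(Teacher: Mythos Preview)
Your proof is correct and follows essentially the same approach as the paper: both prove the two inclusions directly, with the key step being that any non-face contains a connected induced subgraph on exactly $r+1$ vertices. The paper simply asserts this extraction step without justification, whereas your spanning-tree-and-prune-leaves argument fills in that detail explicitly; your observation about minimality of the generating set is also correct and goes slightly beyond what the paper states.
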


\begin{proof}
Let $J=\langle \mathbf x_A ~|~ A \subseteq V(G)\text{ with }
|A|=r+1 \text{ and } G[A] \text{ connected }\rangle$. Suppose that $A \subseteq V$ with $|A|=r+1$ and $G[A]$ connected.
Then $A$ is not an $r$-independent set, i.e., $A \not\in {\rm Ind}_r(G)$, and consequently, $\mathbf x_A$ is in $I_{{\rm Ind}_r(G)}$.  
So $J \subseteq I_{{\rm Ind}_r(G)}$ since all of its generators
are in $I_{{\rm Ind}_r(G)}$.

Consider any square-free monomial $x_{i_1}\cdots x_{i_t} \in I_{{\rm Ind}_r(G)}$.
So $W = \{x_{i_1},\ldots,x_{i_t}\} \subseteq V(G)$ is not
in ${\rm Ind}_r(G)$.  This means that $G[W]$ has a connected 
component with at least $r+1$ vertices.  Thus, we can find a subset $A \subseteq W$ with $|A|=r+1$ such that
$G[A]$ is connected.  So $\mathbf x_A \in J$, and $\mathbf x_A$ divides 
$x_{i_1}\cdots x_{i_t}$.  Since both $I_{\mathrm{Ind}_r(G)}$ and $J$ are square-free monomial ideals we have $I_{{\rm Ind}_r(G)} \subseteq J$.
\end{proof}

\begin{remark}
The ideal $I_{\mathrm{Ind}_r(G)}$ is sometimes denoted by $I_r(G)$. When $r=1$ in the previous Theorem, $I_1(G)=I_{{\rm Ind}_1(G)} = \langle x_ix_j ~|~ \{x_i,x_j\} \in E(G) \rangle$
is the well-known {\it edge ideal} $I(G)$ of $G$. Also observe that 
when $G$ is a connected graph on $n$ vertices, then
$I_{n-1}(G) = \langle x_1x_2\cdots x_n \rangle$. If $G$ is disconnected and $|V(G)|=n$, then $I_r(G)=\langle 0\rangle$, for $r\ge n-1$.
\end{remark}

When $r=1$, i.e., when working with edge ideals, specifying two adjacent vertices is equivalent to specifying an edge.
However, when considering $r>1$, the ideal may not necessarily \emph{specify} the induced subgraphs on $r+1$ vertices; it merely specifies whether or not they are connected. 
It is well known that graphs and square-free monomial ideals generated in degree $2$ are in one-to-one correspondence. 
However, for $r>1$, non-isomorphic graphs may have the same $r$-independence complex (and hence have the same Stanley-Reisner ideals). 
A trivial example is that any two connected graphs on $n$ vertices will have the same $(n-1)$-independence complex. 
A less trivial example is given below.

\begin{example}
    Let $G$ be the complete graph $K_5$ on five vertices $X=\{x_1,\ldots,x_5\}$. 
    Let $r>1$ and $H$ be the graph obtained from $K_5$ by removing the edge $\{x_1,x_2\}$. 
    Note that the facets of $\mathrm{Ind}_r(G)$ are all $F\subseteq\{x_1,\ldots,x_5\}$ such that $|F|=r$. 
    Now suppose $F'\subseteq X$ is a facet of $\mathrm{Ind}_r(H)$. 
    By definition $|F'|\ge r$. If $\{x_1,x_2\}\nsubseteq F'$, then for each $x_i,x_j\in F'$, $\{x_i,x_j\}\in E(H)$. 
    Hence $|F'|\le r$. If $\{x_1,x_2\}\subseteq F'$, then for any $i\notin \{1,2\}$, $H[x_1,x_2,x_i]$ is a connected subgraph of $H$. 
    Thus $|F'|\le r$. Consequently, $|F'|=r$. 
    Hence $\mathrm{Ind}_r(G)=\mathrm{Ind}_r(H)$. The graphs $G$ and $H$ are non-isomorphic graphs since every vertex in $G$ has degree four whereas the vertex $x_1$ in $H$ has degree three. 
\end{example}

The previous example is just a special
case of a more general phenomena.

\begin{proposition}
    Let $K_n$ be the complete graph on $n$ vertices. Suppose $V(K_n)=\sqcup_{i=1}^k W_i$ with $\max_{i=1}^k\{|W_i|\} =s$. If $H$ is the graph obtained from $K_n$ by deleting some edges (maybe all or none) from each $K_n[W_i]$, then for all $r\ge s$, $\mathrm{Ind}_r(K_n)=\mathrm{Ind}_r(H)$.
\end{proposition}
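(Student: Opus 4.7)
The plan is to show that $\mathrm{Ind}_r(K_n)$ and $\mathrm{Ind}_r(H)$ coincide as simplicial complexes by checking containment in both directions. The structural fact driving the whole argument is that $H$ is a spanning subgraph of $K_n$ obtained by deleting edges only from within the parts $W_i$, so every edge of $K_n$ with endpoints in two distinct parts is still present in $H$.

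The containment $\mathrm{Ind}_r(K_n)\subseteq \mathrm{Ind}_r(H)$ is immediate. A subset $A\subseteq V(K_n)$ is $r$-independent in $K_n$ if and only if $|A|\le r$, since every nonempty induced subgraph of the complete graph is connected; and any subset of size at most $r$ is $r$-independent in every graph on the same vertex set, so $A\in \mathrm{Ind}_r(H)$ as well.

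For the reverse containment I would argue contrapositively: if $A\notin \mathrm{Ind}_r(K_n)$, meaning $|A|\ge r+1$, I show $A\notin \mathrm{Ind}_r(H)$ by exhibiting a connected component of $H[A]$ of size at least $r+1$. Since $|A|\ge r+1 > s\ge |W_i|$ for every $i$, the set $A$ cannot sit inside any single part and therefore meets at least two distinct parts $W_i$ and $W_j$. The key claim is then that $H[A]$ is in fact connected: given any two vertices $u,v\in A$, either they lie in different parts, in which case the edge $\{u,v\}$ survives in $H$, or they both lie in the same part $W_i$, in which case I can pick any $w\in A\setminus W_i$ (which exists by the previous observation) and use the two-step path $u-w-v$, whose edges are inter-part edges still present in $H$. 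Hence $H[A]$ is a single connected component of size $|A|\ge r+1$, so $A\notin \mathrm{Ind}_r(H)$.

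I do not anticipate any serious obstacle. The argument is purely combinatorial and reduces to the observation that $H$ contains the complete multipartite graph with parts $W_1,\ldots,W_k$, which endows any subset meeting two or more parts with a join-like connectivity structure. The only case-distinction worth flagging is $A\subseteq W_i$ versus $A$ meeting several parts, and the hypothesis $r\ge s$ is used precisely to eliminate the first case when $|A|\ge r+1$.
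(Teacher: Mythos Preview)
Your proof is correct and follows essentially the same approach as the paper. Both arguments rest on the observation that $H$ contains the complete multipartite graph on the parts $W_1,\ldots,W_k$, so any subset of size exceeding $s$ induces a connected subgraph of $H$; the paper phrases this via a facet characterization with a case split on whether some $W_i\subseteq F$, while your direct connectivity argument (via the two-step path $u$--$w$--$v$) is a slightly more streamlined way to reach the same conclusion.
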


\begin{proof}
The statement is true for all $r \geq n$ by
the discussion after Definition \ref{defn.rindcplx}. So, we can assume $s \leq r < n$.
It is enough to show that if $F\subseteq V(K_n)$ is such that $s \leq |F|=r$, then $F$ is a facet of $\mathrm{Ind}_r(H)$ and these are all the facets of $\mathrm{Ind}_r(H)$. Now suppose $F$ is a facet of $\mathrm{Ind}_r(H)$, then $|F|\ge r$. If $W_i\nsubseteq F$ for all $i$, then the induced subgraph $H[F]$ is connected and hence $|F|\le r$. Now suppose $W_i\subseteq F$ for some $i$. In that case for all $x\in F$, with $x\notin W_i$, $H[W_i\cup\{x\}]$ is a connected subgraph of $H$. Note that $|W_i|\le r$ since $\max_{i=1}^k\{|W_i|\}= s \le r$. Therefore, $F=W_i$, if $|W_i|=r$ and $H[F]$ is connected if $r> |W_i|$. Consequently, $|F|=r$ and hence $\mathrm{Ind}_r(K_n)=\mathrm{Ind}_r(H)$.
\end{proof}


\section{Fr\"oberg's theorem via vertex splittable ideals}\label{vertex spllitable}

In this section, we show that one direction of Fr\"oberg's theorem extends very naturally to the Stanley-Reisner ideals of higher independence complexes. 
On the other hand, the natural converse of this theorem does not hold. 

We first recall the definition of a (linear) resolution.
Given any homogeneous ideal $I \subseteq R = \mathbb{K}[x_1,\ldots,x_n]$, the {\it graded minimal free resolution} of $I$ is the long exact sequence 
$$0\rightarrow \bigoplus_{j \in \mathbb{N}} R(-j)^{\beta_{p,j}(I)}
\rightarrow \bigoplus_{j \in \mathbb{N}} R(-j)^{\beta_{p-1,j}(I)}
\rightarrow \cdots \rightarrow \bigoplus_{j \in \mathbb{N}}
R(-j)^{\beta_{0,j}(I)} \rightarrow I \rightarrow 0,$$
where $R(-j)$ denotes the polynomial ring with the grading
twisted by $j$ and $p \leq n$.  The numbers 
$\beta_{i,j}(I)$ are called the {\it $(i,j)$-th graded Betti
numbers of $I$}.  See \cite{VBook} for more on the graded
resolution.

\begin{definition}
Let $I \subseteq R$ be a homogeneous ideal, and suppose
that all the generators of $I$ have degree $d$.
Then  $I$ has a {\it linear resolution} if 
$\beta_{i,j}(I) =0$ for all $j \neq i+d$.
\end{definition}

\begin{remark}
    The {\it regularity} of an ideal $I$ is defined
    to be ${\rm reg}(I) = \max\{j-i ~|~ \beta_{i,j}(I) \neq 0\}$.
    If $I$ is a homogeneous ideal generated in degree $d$,
    then $I$ has a linear resolution if and only if 
    ${\rm reg}(I)=d$.
\end{remark}

\begin{definition}
A simple graph $G$ is called \emph{chordal} if there are no induced cycles of length four or more.      
The complement of a chordal graph is called \emph{co-chordal}.
\end{definition}

Fr\"oberg \cite{RF} classified which quadratic square-free
monomial ideals have a linear resolution.

\begin{theorem}[Fr\"oberg's Theorem]  Let $G$ be a finite
simple graph.  Then  the edge ideal $I(G) = I_{{\rm Ind}_1(G)}$ has a linear resolution (equivalently regularity is $2$) if and only if $G$ is co-chordal.
\end{theorem}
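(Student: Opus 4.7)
The plan is to translate the algebraic statement into topology via Hochster's formula and then exploit the structure of chordal graphs. Recall that for the Stanley--Reisner ideal $I_\Delta$ of a simplicial complex $\Delta$ on vertex set $V$, Hochster's formula gives
\[
\beta_{i,j}(I_\Delta) = \sum_{W \subseteq V,\ |W|=j} \dim_{\mathbb K} \tilde H_{j-i-2}(\Delta|_W;\mathbb K).
\]
Since $I(G)$ is generated in degree two, having a linear resolution is equivalent to $\beta_{i,j}(I(G)) = 0$ for all $j > i+2$. Setting $\Delta = {\rm Ind}(G)$ and noting that $\Delta|_W = {\rm Ind}(G[W])$ coincides with the clique complex ${\rm Cl}(G^c[W])$ of the induced subgraph of the complement, linearity reduces to the purely topological condition that $\tilde H_k({\rm Cl}(G^c[W]);\mathbb K) = 0$ for every $W \subseteq V(G)$ and every $k \geq 1$.

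For the forward direction, assume $G^c$ is chordal. Since every induced subgraph $H = G^c[W]$ is also chordal, it suffices to establish the vanishing statement above for every chordal graph $H$. I would proceed by induction on $|V(H)|$ using a simplicial vertex $v$ of $H$, that is, one whose closed neighbourhood $N_H[v]$ is a clique; such a vertex exists by the perfect elimination ordering of chordal graphs. Writing ${\rm Cl}(H) = A \cup B$ with $A$ the simplex on $N_H[v]$ and $B = {\rm Cl}(H \setminus v)$, the intersection $A \cap B$ is the simplex on $N_H(v)$. Since both $A$ and $A \cap B$ are contractible, the reduced Mayer--Vietoris sequence collapses to give $\tilde H_k({\rm Cl}(H);\mathbb K) \cong \tilde H_k({\rm Cl}(H \setminus v);\mathbb K)$ for every $k \geq 1$, and the induction closes.

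For the converse, suppose $G^c$ is not chordal; then it contains an induced cycle $C_\ell$ with $\ell \geq 4$. Taking $W = V(C_\ell)$, the subcomplex ${\rm Cl}(C_\ell)$ is the cycle itself as a $1$-dimensional complex, since $\ell \geq 4$ forbids triangles, so $\tilde H_1({\rm Cl}(C_\ell);\mathbb K) = \mathbb K$. Hochster's formula applied with $|W|=\ell$ in homological degree $\ell - i - 2 = 1$ then yields $\beta_{\ell-3,\ell}(I(G)) \geq 1$, which violates linearity because $\ell \neq (\ell - 3) + 2$. The main obstacle in this programme is the topological vanishing for clique complexes of chordal graphs; the simplicial-vertex together with Mayer--Vietoris argument is the cleanest route, though one can alternatively invoke the shellability or non-evasiveness of such complexes.
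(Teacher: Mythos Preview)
The paper does not prove Fr\"oberg's theorem; it is stated as a classical background result with a citation to \cite{RF}, so there is no ``paper's own proof'' to compare against directly. Your argument via Hochster's formula together with the simplicial-vertex Mayer--Vietoris induction is correct and is one of the standard modern proofs of the result.

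That said, the paper does establish the \emph{if} direction (co-chordal $\Rightarrow$ linear resolution) as the special case $r=1$ of its main theorems, and by routes genuinely different from yours. In Section~3 the argument is algebraic: using a simplicial vertex $x_1$ of $G^c$, the ideal is split as $I(G)=x_1J_1+J_2$ with $J_1,J_2$ vertex splittable by induction, and vertex splittability forces a linear resolution. This buys more than linearity---it yields recursive formulas for all graded Betti numbers. In Section~5 the argument is combinatorial-topological: $\mathrm{Ind}_1(G)$ is shown to be $1$-collapsible, hence $1$-Leray, hence $\mathrm{reg}(I(G))=2$ via the identity $\mathrm{reg}(I_\Delta)=L(\Delta)+1$. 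Your approach is closest in spirit to the latter, since both pass through the Leray condition, but you verify it by Mayer--Vietoris on clique complexes rather than by exhibiting a collapsing sequence. The chief advantage of your route is that it handles both implications at once: the induced $C_\ell$ obstruction you identify for the converse has no counterpart in the paper's techniques, which treat only the forward direction.
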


We will prove the ``if'' direction of Fröberg's theorem in the context of higher independence complexes; i.e., the Stanley-Reisner ideal of $\Indr{r}(G)$ has a linear free resolution whenever $G$ is the complement of a chordal graph. 

Let $I$ be a square-free monomial ideal in a polynomial ring $R=\mathbb K[x_1,\ldots,x_n]$ and let $\mathcal{G}(I)$ denote the unique set of minimal generators of the ideal $I$. 
We require the notion of a vertex splittable ideal,
first introduced by Moradi and Khosh-Ahang \cite{SF1}.


\begin{definition}\label{vertex splittable ideal}
\normalfont
 We say that a monomial ideal $I$ is {\it vertex splittable} if $I$ can be obtained by the following recursive procedure:
 \begin{enumerate}
  \item[(i)] If either $I=\langle m\rangle$ where $m$ is a monomial, or $I=\langle 0\rangle$, or $I=R$.
  
  \item[(ii)] If there exists a variable $x_i$ and two vertex splittable ideals $I_1$ and $I_2$ of the polynomial ring $\mathbb K[x_1,\ldots,\widehat{x_i},\ldots,x_n]$ such that $I=x_iI_1+I_2$ with $I_2\subseteq I_1$ and the minimal generators of $I$ is the disjoint union of the minimal generators of $x_iI_1$ and $I_2$. 
 \end{enumerate}
\end{definition}

\begin{lemma}\label{variable}
    Let $I$ be an ideal of $R$ such that $I$ is generated by variables. Then $I$ is vertex splittable.
\end{lemma}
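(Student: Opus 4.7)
The plan is to prove the lemma by induction on the number of minimal generators of $I$. Write $I = \langle x_{i_1}, x_{i_2}, \ldots, x_{i_k}\rangle$ where the $x_{i_j}$ are distinct variables; this is the minimal generating set of $I$.

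For the base case $k=1$, the ideal $I = \langle x_{i_1}\rangle$ is principal and generated by a monomial, so it falls under clause (i) of \Cref{vertex splittable ideal}.

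For the inductive step, assume every ideal generated by fewer than $k$ variables is vertex splittable. Working in $R = \mathbb K[x_1,\ldots,x_n]$, let $R' = \mathbb K[x_1,\ldots,\widehat{x_{i_1}},\ldots,x_n]$ and set
\[
I_1 = R' \quad \text{and} \quad I_2 = \langle x_{i_2},\ldots,x_{i_k}\rangle \subseteq R'.
\]
Then $I_2 \subseteq I_1$ trivially, and
\[
x_{i_1} I_1 + I_2 = \langle x_{i_1}\rangle + \langle x_{i_2},\ldots,x_{i_k}\rangle = I.
\]
The unique minimal generator of $x_{i_1} I_1$ is $x_{i_1}$, while the minimal generators of $I_2$ are $x_{i_2},\ldots,x_{i_k}$, so the minimal generating set of $I$ is the disjoint union of these, as required. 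Finally, $I_1 = R'$ is vertex splittable by clause (i), and $I_2$ is vertex splittable by the inductive hypothesis (it is generated by $k-1$ variables in $R'$). Therefore $I = x_{i_1} I_1 + I_2$ satisfies clause (ii), and is vertex splittable.

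There is no real obstacle here; the only subtlety is to check that the definition permits $I_1$ to be the entire smaller polynomial ring $R'$, which is covered by the ``$I = R$'' case in clause (i). Once this is noticed, the decomposition peeling off one variable at a time yields the result immediately.
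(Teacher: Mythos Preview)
Your proof is correct and follows exactly the approach indicated in the paper, which simply states that the result follows by induction on the number of generators of $I$; you have supplied the details of that induction.
 The only point worth noting is that your argument tacitly assumes $k \geq 1$, but the case $k=0$ (i.e., $I=\langle 0\rangle$) is also covered by clause~(i) of Definition~\ref{vertex splittable ideal}.
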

\begin{proof}
    This follows by applying induction on the number of generators of $I$. 
\end{proof}
\begin{lemma}\label{extra variable product}
    Let $I$ be a vertex splittable ideal of $R$. Then the ideal $x_{n+1}I\subseteq R[x_{n+1}]$ is also vertex splittable.
\end{lemma}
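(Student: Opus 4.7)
The plan is to apply condition (ii) of \Cref{vertex splittable ideal} directly, using $x_{n+1}$ itself as the splitting variable, so that no recursion into the structure of $I$ is needed.

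Specifically, I would set $I_1 := I$ and $I_2 := \langle 0 \rangle$, both viewed as ideals of $R = \mathbb{K}[x_1,\ldots,x_n]$, which is precisely the polynomial ring $\mathbb{K}[x_1,\ldots,\widehat{x_{n+1}},\ldots,x_{n+1}]$ obtained from $R[x_{n+1}]$ by deleting $x_{n+1}$. By hypothesis $I_1 = I$ is vertex splittable, and $I_2 = \langle 0 \rangle$ is vertex splittable by clause (i) of \Cref{vertex splittable ideal}. The decomposition
\[
x_{n+1}I \;=\; x_{n+1}I_1 + I_2
\]
holds tautologically, as does the inclusion $I_2 = \langle 0 \rangle \subseteq I_1$.

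It then remains to verify the disjoint-union condition on minimal generators. Since $I_2 = \langle 0 \rangle$ has no minimal generators, this amounts to observing that $\mathcal{G}(x_{n+1}I) = \{x_{n+1}m \mid m \in \mathcal{G}(I)\}$; this is immediate because multiplication by the single new variable $x_{n+1}$ gives a bijection between the minimal generating sets of $I$ and of $x_{n+1}I$, no divisibility relations being introduced. All clauses of (ii) are therefore met, and $x_{n+1}I$ is vertex splittable.

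I do not foresee any real obstacle: the only thing one has to be careful about is that the definition requires the auxiliary ideals $I_1, I_2$ to live in the variable-deleted ring, which is exactly what is happening here since $I \subseteq R$ and $R$ does not involve $x_{n+1}$.
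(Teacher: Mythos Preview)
Your argument is correct. The verification that condition~(ii) of \Cref{vertex splittable ideal} is met with $x_{n+1}$ as the splitting variable, $I_1=I$, and $I_2=\langle 0\rangle$ is complete: both $I_1$ and $I_2$ are vertex splittable ideals of $R=\mathbb{K}[x_1,\ldots,x_n]$, the containment $I_2\subseteq I_1$ is trivial, and the generator condition $\mathcal{G}(x_{n+1}I)=\mathcal{G}(x_{n+1}I_1)\sqcup\mathcal{G}(I_2)$ reduces to the tautology $\mathcal{G}(x_{n+1}I)=\mathcal{G}(x_{n+1}I)\sqcup\emptyset$.

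The paper takes a different route: it argues by induction on $n$ through the recursive structure of $I$. If $I=x_iI_1+I_2$ is a vertex splitting of $I$ in $R$, the paper rewrites $x_{n+1}I=x_i(x_{n+1}I_1)+x_{n+1}I_2$ and applies the inductive hypothesis to $I_1$ and $I_2$ (which live in a ring with one fewer variable) to conclude that $x_{n+1}I_1$ and $x_{n+1}I_2$ are vertex splittable; the same variable $x_i$ then serves as the splitting variable for $x_{n+1}I$. Your approach is shorter and avoids the induction entirely by exploiting the new variable $x_{n+1}$ directly. The paper's proof, on the other hand, yields the slightly stronger information that a full splitting sequence for $x_{n+1}I$ can be obtained from one for $I$ by using the \emph{same} splitting variables in the \emph{same} order---a fact that could be useful if one wanted to track the recursion explicitly (e.g.\ for Betti-number computations), but which is not needed for the lemma as stated.
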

\begin{proof}
    We prove this by induction on $n$. If $n=1$, then $I=\langle 0\rangle$, or $\langle 1\rangle$, or $\langle x_1\rangle$. Hence $x_2I$ is a vertex splittable ideal.

    Let us assume that $n\ge 2$. If $I=\langle 0\rangle$, or $\langle 1\rangle$, or $\langle m\rangle$, where $m$ is a monomial, then we can see that  $x_{n+1}I$ is a vertex splittable ideal. Now, suppose that
    \[
    I=x_iI_1+I_2,
    \]
    \noindent
    where $I_1$ and $I_2$ are vertex splittable ideals of $\mathbb K[x_1,\ldots,\widehat{x_i},\ldots, x_n]$ with $\mathcal G(I)=\mathcal G(x_iI_1)\sqcup\mathcal G(I_2)$ and $I_2\subseteq I_1$. Then
    \[
    x_{n+1}I=x_i(x_{n+1}I_1)+x_{n+1}I_2.
    \]
    \noindent
    By induction $x_{n+1}I_1$ and $x_{n+1}I_2$ are vertex splittable ideals of $\mathbb K[x_1,\ldots,\widehat{x_i},\ldots, x_n,x_{n+1}]$ with $\mathcal G(x_{n+1}I)=\mathcal G(x_ix_{n+1}I_1)\sqcup\mathcal G(x_{n+1}I_2)$ and $x_{n+1}I_2\subseteq x_{n+1}I_1$. Therefore, $x_{n+1} I$ is a vertex splittable ideal.
    \end{proof}
There is another characterization of chordal graphs that we will use frequently. 
A vertex $v$ of a graph is called a \emph{simplicial vertex} if the induced subgraph on $v$ and its neighbours 
$N_G(v)$ is a clique. 
A graph $G$ is chordal if and only if there is a subset of vertices $\{v_1,\dots, v_n\}$ such that $v_{i}$ is simplicial in the graph induced on $V(G)\setminus\{v_1,\dots,v_{i-1}\}$ for $i=1,\ldots,n$.

Moradi and Khosh-Ahang in \cite{SF1} showed that the edge ideal of a co-chordal graph $G$, i.e., the Stanley-Reisner ideal of the $1$-independence complex of $G$ is a vertex splittable ideal.

\begin{theorem}\cite[Theorem 3.6]{SF1}
    If $G$ is a co-chordal graph on the vertex set $V(G)$, then $I_1(G)=I_{\mathrm{Ind}_1(G)}$ is a vertex splittable ideal of $R$.
\label{r=1 case}
\end{theorem}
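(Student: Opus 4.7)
The plan is to induct on $|V(G)|$, exploiting the fact that every chordal graph admits a simplicial vertex. The base case is when $G$ has no edges, in which case $I_1(G) = \langle 0 \rangle$, which is vertex splittable by Definition \ref{vertex splittable ideal}(i).

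For the inductive step, suppose $G$ is co-chordal with at least one edge. Since $G^c$ is chordal, pick a simplicial vertex $x$ of $G^c$. Let $N = N_G(x)$ and $S = V(G) \setminus (N \cup \{x\})$. The defining property of a simplicial vertex says that $N_{G^c}(x) = S$ induces a clique in $G^c$, which translates to $S$ being an independent set in $G$. The splitting I would use is
$$I_1(G) = x \cdot I_1' + I_2, \quad \text{where} \quad I_1' = \langle y \mid y \in N \rangle \text{ and } I_2 = I_1(G \setminus x).$$
That this is an equality of ideals is immediate: the generators of $I_1(G)$ that contain $x$ are exactly the monomials $xy$ with $y \in N$, while those that do not contain $x$ are exactly the generators of $I_1(G \setminus x)$. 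Moreover, the two sets of minimal generators are disjoint, since one family contains the variable $x$ and the other does not.

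To verify the conditions of Definition \ref{vertex splittable ideal}(ii), observe that both $I_1'$ and $I_2$ are ideals of $\mathbb{K}[x_1, \ldots, \widehat{x}, \ldots, x_n]$. Vertex splittability of $I_1'$ follows from Lemma \ref{variable}. For $I_2$: since $(G \setminus x)^c = G^c \setminus x$ and induced subgraphs of chordal graphs are chordal, $G \setminus x$ is also co-chordal, and by the inductive hypothesis $I_2$ is vertex splittable. The crucial containment $I_2 \subseteq I_1'$ holds because every edge $\{u,v\}$ of $G \setminus x$ must have at least one endpoint in $N$; otherwise both $u,v$ would lie in $S$, contradicting the fact that $S$ is independent in $G$.

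The only genuinely non-formal choice is which vertex to split on, and the whole argument hinges on this: picking a simplicial vertex of the chordal graph $G^c$ is precisely what forces the non-neighbours of $x$ in $G$ to be independent, which in turn is exactly what is needed for the containment $I_2 \subseteq I_1'$. Everything else is bookkeeping with the recursive definition, together with the standard hereditary stability of (co-)chordality under vertex deletion.
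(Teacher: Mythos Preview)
Your argument is correct. The paper does not give its own proof of this statement; it simply quotes it as \cite[Theorem 3.6]{SF1} and uses it as the base case $r=1$ in the induction proving Theorem~\ref{general r case}. Your proof is in fact exactly the $r=1$ specialisation of that inductive argument: the splitting $I_r(G)=x_1J_1+J_2$ with $J_2=I_r(G\setminus x_1)$ and $J_1$ generated by the degree-$r$ monomials $\prod x_{i_s}$ for which $G[\{x_1,x_{i_1},\dots,x_{i_r}\}]$ is connected becomes, for $r=1$, precisely your splitting with $I_1'=\langle y\mid y\in N_G(x)\rangle$, and the containment $J_2\subseteq J_1$ reduces to your observation that the non-neighbours of $x$ form an independent set.
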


Our goal is to extend the above theorem to $I_{\mathrm{Ind}_r(G)}$ for $r>1$. 
In order to do this, we construct a new graph $\widetilde G$ from the given co-chordal graph $G$ 
with $x_1$ the simplical vertex
of $G^c$ as follows:
\begin{align*}
    V(\widetilde G)&=V(G)\setminus\{x_1\};\\
    E(\widetilde G)&=E(G\setminus x_1)\cup \{\{x_i,x_j\}\mid \{x_i,x_j\}\in E(G^c\setminus x_1)\text{ and } N_{G^c}(x_1)\cap\{x_i,x_j\}=\emptyset\}.
\end{align*}
Note that if $N_{G^c}(x_1)=Y$, then the induced subgraph of $G^c$ on the vertex set $\{x_1\}\cup Y$ is a clique since $x_1$ is a simplicial vertex of $G^c$. 
Thus we can rename the vertices of $G$ as $V(G)=V(G^c)=\{x_1\}\cup Y\cup W$ such that $\{x_1\}\cup Y$ is an independent set in $G$ and, $x_1$ is connected by an edge in $G$ to all the vertices in $W$. 
Informally, to construct $\widetilde G$ from $G$ we first remove the vertex $x_1$ and all its adjacent edges. 
Then we add  edges to the remaining vertices of $G$ according to the following rules. 
Among the vertices in $Y\sqcup W$, if $\{w_i,w_j\}\notin E(G\setminus x_1)$ for some $w_i,w_j\in W$, then we add it to $G$ since the induced subgraph on $\{x_1,w_i,w_j\}$ is connected in $G$. 
However, if $\{y_i,w_j\}\notin E(G\setminus x_1)$ for some $y_i\in Y$ and $w_j\in W$, then we don't add it to $\widetilde G$ since the induced subgraph on $\{x_1,y_i,w_j\}$ is disconnected in $G$. Moreover, $\{y_i,y_j\}\notin E(G\setminus x_1)$ for each $y_i,y_j\in Y$, and we also don't add it to $\widetilde G$ since $\{x_1,y_i,y_j\}$ is an independent set in $G$. Thus in $\widetilde G$, the vertices in $Y$ form an independent set and the induced subgraph on $W$ is a clique. In other words, $\widetilde{G}^c$ is the graph on the vertex set $Y\sqcup W$ such that the induced graph on $Y$ forms a clique, whereas the set $W$ forms an 
independent set.

We illustrate the construction of
$\widetilde G$ using an example.

\begin{example}\label{example 1}
\begin{figure}[!ht]
\centering
\begin{tikzpicture}
[scale=.55]
\draw [fill] (0.7,2) circle [radius=0.1];
\draw [fill] (1.7,1) circle [radius=0.1];
\draw [fill] (3.2,1) circle [radius=0.1];
\draw [fill] (1.7,3) circle [radius=0.1];
\draw [fill] (3.2,3) circle [radius=0.1];
\draw [fill] (4.7,3) circle [radius=0.1];
\node at (0,2) {$x_1$};
\node at (1.7,3.5) {$w_1$};
\node at (3.2,3.5) {$w_3$};
\node at (4.7,3.5) {$w_2$};
\node at (1.7,0.5) {$y_1$};
\node at (3.2,0.5) {$y_2$};
\node at (2.3,-0.5) {$G$};

\draw (3.2,3)--(0.7,2)--(1.7,3)--(1.7,1)--(4.7,3)--(0.7,2);
\draw (1.7,3)--(3.2,3)--(1.7,1);
\draw (3.2,3)--(3.2,1)--(4.7,3);

\draw [fill] (10.7,1) circle [radius=0.1];
\draw [fill] (12.2,1) circle [radius=0.1];
\draw [fill] (13.7,1) circle [radius=0.1];
\draw [fill] (15.2,1) circle [radius=0.1];
\draw [fill] (16.7,1) circle [radius=0.1];
\draw [fill] (12.2,2.5) circle [radius=0.1];

\draw (12.2,1)--(12.2,2.5)--(10.7,1)--(12.2,1)--(13.7,1)--(15.2,1)--(16.7,1);

\node at (10.7,0.5) {$x_1$};
\node at (12.2,3) {$y_1$};
\node at (12.2,0.5) {$y_2$};
\node at (13.7,0.5) {$w_1$};
\node at (15.2,0.5) {$w_2$};
\node at (16.7,0.5) {$w_3$};
\node at (14,-0.5) {$G^c$};
\end{tikzpicture}\caption{A co-chordal graph and its complement.}\label{figure 1}
\end{figure}
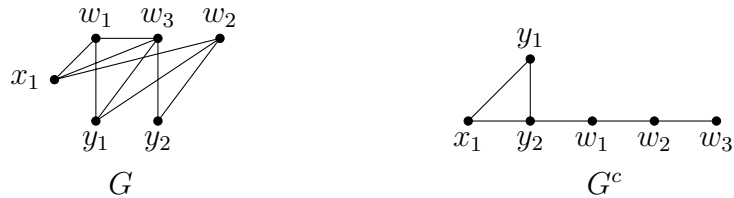

Let $G$ be the co-chordal graph in Figure \ref{figure 1}. Then $V(G)=\{x_1\}\cup Y\cup W$, where $Y=\{y_1,y_2\}$ and $W=\{w_1,w_2,w_3\}$. We see that $\{x_1,y_1,y_2\}$ forms an independent set in $G$ and $\{x_1,w_i\}\in E(G)$ for $1\le i\le 3$. The graph $\widetilde G$ is constructed from $G$ by first removing the vertex $x_1$ and its adjacent edges and then adding the edges $\{w_1,w_2\}$ and $\{w_2,w_3\}$.  Thus in $\widetilde G^c$, $W=\{w_1,w_2,w_3\}$ forms an independent set and the induced subgraph on  $Y=\{y_1,y_2\}$ is a clique. 

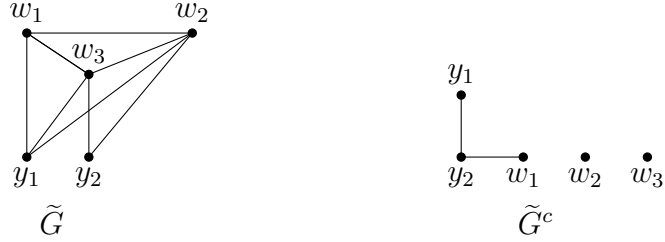
\begin{figure}[!ht]
\centering
\begin{tikzpicture}
[scale=.55]
\draw [fill] (1.7,1) circle [radius=0.1];
\draw [fill] (3.2,1) circle [radius=0.1];
\draw [fill] (1.7,4) circle [radius=0.1];
\draw [fill] (3.2,3) circle [radius=0.1];
\draw [fill] (5.7,4) circle [radius=0.1];

\node at (1.7,4.5) {$w_1$};
\node at (3.2,3.5) {$w_3$};
\node at (5.7,4.5) {$w_2$};
\node at (1.7,0.5) {$y_1$};
\node at (3.2,0.5) {$y_2$};
\node at (2.3,-0.5) {$\widetilde G$};

\draw (3.2,3)--(1.7,4)--(1.7,1)--(5.7,4);
\draw (5.7,4)--(1.7,4)--(3.2,3)--(1.7,1);
\draw (3.2,3)--(3.2,1)--(5.7,4)--(3.2,3);

\draw [fill] (12.2,1) circle [radius=0.1];
\draw [fill] (13.7,1) circle [radius=0.1];
\draw [fill] (15.2,1) circle [radius=0.1];
\draw [fill] (16.7,1) circle [radius=0.1];
\draw [fill] (12.2,2.5) circle [radius=0.1];

\draw (12.2,2.5)--(12.2,1)--(13.7,1);

\node at (12.2,3) {$y_1$};
\node at (12.2,0.5) {$y_2$};
\node at (13.7,0.5) {$w_1$};
\node at (15.2,0.5) {$w_2$};
\node at (16.7,0.5) {$w_3$};
\node at (14,-0.5) {$\widetilde G^c$};

\end{tikzpicture}\caption{The graph $\widetilde G$ and its complement.}\label{figure 2}
\end{figure}

\end{example}

Note that the graph $\widetilde G$ in the example is again co-chordal. 
It is in fact true that for any co-chordal graph $G$, the graph $\widetilde G$ is co-chordal, which we now prove.

\begin{proposition}\label{proposition 1}
    Let $G$ be a graph such that $V(G)=W\sqcup Y$, where $W$ forms an independent set in $G$, and the induced subgraph $G[Y]$ is a clique. Then both $G$ and $G^c$ are chordal.
\end{proposition}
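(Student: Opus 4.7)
The plan is to exploit the symmetry of the hypothesis: the condition ``$V(G)=W\sqcup Y$ with $W$ independent and $G[Y]$ a clique'' is preserved under passing to the complement (it becomes ``$W$ is a clique and $Y$ is independent'' in $G^c$, which is the same condition with $W$ and $Y$ interchanged). So it suffices to prove that any graph $G$ of this form is chordal; the statement for $G^c$ then follows immediately by swapping the roles of $W$ and $Y$.

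To prove $G$ is chordal, I would argue by contradiction: assume $C$ is an induced cycle in $G$ of length $\ell \ge 4$, and set $a = |V(C)\cap W|$ and $b=|V(C)\cap Y|$, so $a+b=\ell$. I would record two structural constraints. First, since consecutive vertices of $C$ are adjacent in $G$ and $W$ is independent, no two consecutive vertices of $C$ lie in $W$; in particular $b \ge \lceil \ell/2\rceil \ge 2$. Second, since $G[Y]$ is a clique, any two vertices of $V(C)\cap Y$ are adjacent in $G$, and because $C$ is induced these edges must actually be edges of $C$; hence the $Y$-vertices of $C$ form a consecutive arc on the cycle.

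With those two facts in hand, the case analysis collapses quickly. If $b\ge 3$, then three consecutive $Y$-vertices $y_1,y_2,y_3$ on $C$ force the edge $y_1y_3$ to be a chord of $C$, contradicting that $C$ is induced. If $b=2$, the two $Y$-vertices are adjacent on $C$, so the remaining $a=\ell-2\ge 2$ vertices of $C$ are all in $W$ and form a consecutive path on $C$ of length at least two; but any two adjacent vertices on that path would be adjacent in $G$, contradicting that $W$ is independent. The case $b\le 1$ is ruled out by the first constraint $b\ge 2$. This exhausts the possibilities, so no induced cycle of length $\ge 4$ exists in $G$, and $G$ is chordal.

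The argument is essentially a routine case check; I do not anticipate a serious obstacle. The only mild subtlety is being careful with the ``consecutive arc'' conclusion for the $Y$-vertices: it needs the cycle to be induced, not merely a cycle, and it should be stated precisely before it is used. Writing the symmetry observation cleanly at the start keeps the proof short by reducing two claims to one.
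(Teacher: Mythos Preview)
Your proof is correct and follows essentially the same strategy as the paper: exploit the $W\leftrightarrow Y$ symmetry under complementation to reduce to showing $G$ is chordal, then rule out an induced cycle of length $\ge 4$ by a case analysis on $|V(C)\cap Y|$. The only cosmetic difference is in the $b=2$ case, where the paper forces the alternating pattern $y\,w\,y'\,w'$ (using $W$ independent) and finds the chord between the two $Y$-vertices, while you force the block pattern $y\,y'\,w\,\ldots$ (via your consecutive-arc observation) and find the forbidden $W$--$W$ cycle edge; these are dual versions of the same contradiction.
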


\begin{proof}
    Note that, in $G^c$, $Y$ forms an independent set and $G^c[W]$ is a clique. Thus it is enough to show that $G$ is a chordal graph. Suppose $z_1\cdots z_p$ is a minimal cycle in $G$ of length at least four. Let $Z=\{z_1,\ldots,z_p\}$. If $|Z\cap Y|\ge 3$, then for any $y_{i_1},y_{i_2},y_{i_3}\in Z\cap Y$ the induced subgraph $G[\{y_{i_1},y_{i_2},y_{i_3}\}]$ would be a smaller cycle, a contradiction. Thus $|Z\cap Y|\le 2$. In that case, as $W$ forms an independent set in $G$ we must have $p\le 4$. Consequently, $p=4$. If $|Z\cap Y|=2$, then there exists some $y_{l_1},y_{l_2}\in Z\cap Y$ and $w_{m_1},w_{m_2}\in Z\cap W$ such that $G[Z]$ is the cycle $y_{l_1}w_{m_1}y_{l_2}w_{m_2}$. Note here that $y_{l_2}w_{m_2}y_{l_1}$ is  a smaller cycle in $G[Z]$, a contradiction. If $|Z\cap Y|\le 1$, then $|Z\cap W|\ge 3$. Hence there exists $w_{j_1},w_{j_2}\in Z\cap W$ such that $\{w_{j_1},w_{j_2}\}\in E(G)$, again a contradiction. Thus $G$ contains induced cycles of length at most $3$. This completes the proof.
\end{proof}

\begin{lemma}\label{aux lemma 1}
    Given a co-chordal graph $G$, let $\widetilde G$ be the graph constructed above. 
    Then $\widetilde G$ is also a co-chordal graph.
\end{lemma}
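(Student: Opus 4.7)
The plan is to show directly that the partition $V(\widetilde G) = Y \sqcup W$ coming from the construction puts $\widetilde G$ into the form to which Proposition \ref{proposition 1} applies, and then invoke that proposition.

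First I would unpack the structural description of $\widetilde G$. Because $x_1$ is a simplicial vertex of $G^c$ and $Y = N_{G^c}(x_1)$, the set $\{x_1\} \cup Y$ is a clique in $G^c$, equivalently an independent set in $G$. In particular no pair inside $Y$ is an edge of $G \setminus x_1$. Moreover, by the second clause in the definition of $E(\widetilde G)$, the only edges that get added lie entirely inside $W$ (any added edge must be disjoint from $N_{G^c}(x_1) = Y$). Hence $Y$ remains an independent set in $\widetilde G$.

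Next I would verify that $W$ is a clique in $\widetilde G$. Every vertex of $W$ is a neighbor of $x_1$ in $G$ by definition, so whenever $w_i, w_j \in W$ are not already adjacent in $G \setminus x_1$, the pair $\{w_i, w_j\}$ is an edge of $G^c$ that is disjoint from $Y$, and is therefore added to $\widetilde G$. Combining this with the original edges among $W$ in $G$, we get that $\widetilde G[W]$ is a clique.

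Finally I would apply Proposition \ref{proposition 1} to $\widetilde G$ with the roles of its hypothesis matched by the independent set $Y$ (playing the role of $W$ in that proposition) and the clique $W$ (playing the role of $Y$ in that proposition). The proposition yields that both $\widetilde G$ and $\widetilde G^c$ are chordal; in particular $\widetilde G$ is co-chordal, which is the desired conclusion. The only real content of the argument is the verification of the split-graph structure of $\widetilde G$; once that is in place the result is an immediate consequence of the previously proved proposition, so I do not anticipate any obstacle beyond keeping the bookkeeping of which edges get added (and which do not) transparent.
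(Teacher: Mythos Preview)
Your proposal is correct and follows essentially the same approach as the paper: both arguments reduce to the observation that $V(\widetilde G)=Y\sqcup W$ with $Y$ independent and $W$ a clique in $\widetilde G$ (equivalently, $Y$ a clique and $W$ independent in $\widetilde G^c$), and then invoke Proposition~\ref{proposition 1}. The paper's proof is terser only because these structural facts were already spelled out in the discussion preceding Example~\ref{example 1}, whereas you re-derive them inside the proof.
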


\begin{proof}
    In $\widetilde G^c$, we have $V(\widetilde G^c)=Y\sqcup W$ such that the vertices in $W$ form an independent set and the induced subgraph on $Y$ is a clique. Therefore, by Proposition \ref{proposition 1}, $\widetilde G^c$ is a chordal graph.
\end{proof}

Now we are ready to prove the main theorem of this section.

\begin{theorem}\label{general r case}
    If $G$ is co-chordal, then the ideal $I_r(G)=I_{\mathrm{Ind}_r(G)}$ is vertex splittable for all $r\geq 1$.
\end{theorem}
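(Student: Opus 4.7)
My plan is to prove Theorem \ref{general r case} by strong induction on $r+|V(G)|$. The base case $r=1$ is exactly Theorem \ref{r=1 case} of Moradi and Khosh-Ahang, and the degenerate cases where $I_r(G)=\langle 0\rangle$ (in particular whenever $|V(G)|\le r$, since no subset of size $r+1$ exists) are immediate from item (i) of Definition \ref{vertex splittable ideal}. So assume $r\ge 2$ and $|V(G)|\ge r+1$. Choose a simplicial vertex $x_1$ of the chordal graph $G^c$, and partition $V(G)=\{x_1\}\sqcup Y\sqcup W$ with $Y=N_{G^c}(x_1)$ and $W=V(G)\setminus N_{G^c}[x_1]$, exactly as in the discussion preceding Example \ref{example 1}. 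Build the auxiliary graph $\widetilde G$ on $Y\sqcup W$ by the construction given there. By Lemma \ref{aux lemma 1}, $\widetilde G$ is co-chordal, and $G\setminus x_1$ is co-chordal as well since $(G\setminus x_1)^c=G^c\setminus x_1$ is still chordal (chordality is preserved under vertex deletion).

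The splitting I propose is
$$I_r(G)\;=\;x_1\cdot I_{r-1}(\widetilde G)\;+\;I_r(G\setminus x_1),$$
viewing both summands as ideals in $\mathbb{K}[V(G)\setminus\{x_1\}]$. Writing $J_1:=I_{r-1}(\widetilde G)$ and $J_2:=I_r(G\setminus x_1)$, the induction hypothesis applies to each, since both have strictly smaller $r+|V|$, so $J_1$ and $J_2$ are vertex splittable. The minimal generators of $x_1J_1$ all contain $x_1$ while those of $J_2$ do not, so the two generating sets are automatically disjoint. What remains for Definition \ref{vertex splittable ideal} is to show (a) that $\mathcal{G}(x_1J_1)\sqcup\mathcal{G}(J_2)$ exhausts $\mathcal{G}(I_r(G))$, and (b) that $J_2\subseteq J_1$. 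For (b), if $x_A\in\mathcal{G}(J_2)$ then $(G\setminus x_1)[A]$ is connected, hence $\widetilde G[A]$ is connected since $\widetilde G$ contains $G\setminus x_1$ as a subgraph; deleting any leaf $v$ from a spanning tree of $\widetilde G[A]$ yields $A'=A\setminus\{v\}$ with $|A'|=r$ and $\widetilde G[A']$ connected, so $x_{A'}\in\mathcal{G}(J_1)$ and it divides $x_A$.

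The heart of the argument, which I expect to be the main obstacle, is part (a). It reduces to establishing the equivalence
$$G[\{x_1\}\cup B]\text{ is connected}\;\iff\;\widetilde G[B]\text{ is connected},$$
for every $B\subseteq Y\sqcup W$ with $|B|=r\ge 2$: this identifies the minimal generators of $x_1J_1$ with exactly those generators of $I_r(G)$ that contain $x_1$, while the generators not containing $x_1$ are precisely $\mathcal{G}(J_2)$ by definition. The forward direction uses crucially that $Y$ is independent in $G$ (since $\{x_1\}\cup Y$ is a clique in $G^c$), so every edge of $G[B]$ incident to a $Y$-vertex must go into $W\cap B$, together with the fact that $W$ forms a clique in $\widetilde G$. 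The reverse direction uses that $x_1$ is adjacent in $G$ to every vertex of $W$, which lets me reroute each ``new'' edge $w_iw_j$ of $\widetilde G$ (one that was added in passing from $G\setminus x_1$ to $\widetilde G$) as the length-two walk $w_i-x_1-w_j$ inside $G[\{x_1\}\cup B]$. Once this equivalence is in place, the three requirements of Definition \ref{vertex splittable ideal} are met, and the induction closes.
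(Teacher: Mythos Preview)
Your proof is correct and follows essentially the same route as the paper: the same choice of simplicial vertex $x_1$, the same auxiliary graph $\widetilde G$, the same splitting $I_r(G)=x_1\,I_{r-1}(\widetilde G)+I_r(G\setminus x_1)$, and the same key equivalence identifying the generators of $x_1J_1$ with those generators of $I_r(G)$ that involve $x_1$. The only cosmetic differences are that the paper organizes the induction as a double induction on $n$ and $r$ rather than on $r+|V(G)|$, and for the containment $J_2\subseteq J_1$ the paper argues by a short case split on whether $Y\cap S=\emptyset$ instead of your spanning-tree leaf-deletion argument; both verifications are equally valid.
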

\begin{proof}
    We prove this by induction on $n$ and $r$. For a fixed $r$, if $n\in\{1,2,\ldots,r\}$ then $I_r(G)=\langle 0\rangle$ and hence a vertex splittable ideal. If $n=r+1$, then $I_r(G)=\langle 0\rangle$ if $G$ is a disconnected graph. Otherwise, $I_r(G)=\langle x_1\cdots x_{r+1}\rangle$. In both cases $I_r(G)$ is a vertex splittable ideal. Also, for $n \geq 1$, the case $r=1$ follows from Theorem \ref{r=1 case}. Now take any $n\ge 3$. Since $G^c$ is a chordal graph, without loss of generality we can assume that $x_1$ is a simplicial vertex of $G^c$. Then we can write 
    \[
    I_r(G)=x_1J_1+J_2,
    \]
    where 
    \[J_1=\left\langle \prod_{s=1}^r x_{i_s}\mid G[\{x_1,x_{i_1},\ldots,x_{i_r}\}] \text{ is a connected subgraph of } G \right\rangle,\] 
    and the ideal 
    \[J_2=\left\langle \prod_{s=1}^{r+1} x_{j_s}\mid x_1\notin \{x_{j_1},\ldots,x_{j_{r+1}}\}\text{ and }G[\{x_{j_1},\ldots,x_{j_{r+1}}\}] \text{ is connected }  \right\rangle.\] 
    Now construct the graph $\widetilde G$ from $G$ as described above, i.e., 
\begin{align*}
    V(\widetilde G)&=V(G)\setminus\{x_1\};\\
    E(\widetilde G)&=E(G\setminus x_1)\cup \{\{x_i,x_j\}\mid \{x_i,x_j\}\in E(G^c\setminus x_1)\text{ and } N_{G^c}(x_1)\cap\{x_i,x_j\}=\emptyset\}.
\end{align*}
Rename the vertices of $G$ as before:  $V(G)=V(G^c)=\{x_1\}\cup Y\cup W$, where $Y=N_{G^c}(x_1)$ and $W=N_G(x_1)$. Then $\{x_1\}\cup Y$ is an independent set in $G$. Moreover, $x_1$ is connected by an edge in $G$ to all the vertices in $W$. In $\widetilde G$ the vertices in $Y$ forms an independent set and the the induced subgraph on $W$ is a clique. Thus in $\widetilde G^c$ the vertices in $Y$ forms a clique, whereas the vertices in $W$ forms an independent set. By Lemma \ref{aux lemma 1}, $\widetilde G$ is a co-chordal graph.
Now our aim is to prove the following.

\begin{enumerate}[$(i)$]
    \item $J_2=I_r(G\setminus x_1)$,
    \item $J_2\subseteq J_1$, and
    \item $J_1=I_{r-1}(\widetilde G)$.
\end{enumerate}

\begin{proof}[Proof of (i)]
 If $\mathbf x_S\in J_2$ is a minimal
 generator, then $x_1\notin S$ and $G[S]$ is connected, where $|S|=r+1$. Therefore, $\mathbf x_S\in I_r(G\setminus x_1)$. Similarly, if $\mathbf x_S\in I_r(G\setminus x_1)$ is a minimal generator, then $x_1\notin S$ and hence $\mathbf x_{S}\in J_2$. Thus $J_2=I_r(G\setminus x_1)$.    
\end{proof}

\begin{proof}[Proof of (ii)]
 Let $\mathbf x_S\in J_2$. Then $x_1\notin S$ and $G[S]$ is connected, where $|S|=r+1$. If $Y\cap S=\emptyset$, then for any $w\in W\cap S$, the induced subgraph on $\{x_1\}\cup(S\setminus\{w\} )$ is a connected subgraph of $G$ since $N_G(x_1)=W$. Note here that $|S\setminus\{w\}|=r$ and $\mathbf x_{S\setminus\{w\}}\in J_1$. Since $\mathbf x_{S\setminus\{w\}}\vert \mathbf x_{S}$, we have $\mathbf x_S\in J_1$. If $Y\cap S\neq \emptyset$, then for any $y\in Y$, the induced subgraph on $\{x_1\}\cup(S\setminus \{y\})$ is a connected subgraph of $G$. Indeed, if $z \in (S \setminus 
 \{y\}) \cap W$, then $z$ is adjacent to $x_1$.
 If $y' \in (S \setminus \{y\}) \cap Y$, then
 there is $w \in W$ such that $\{y',w\} \in E(G)$
 since $y'$ is connected to some element of $S$,
 but it cannot be $y$ since the set $Y$ is an
 independent set.  But because $\{w,x_1\} \in E(G)$, there is a path for all vertices
 $(S \setminus \{y\})$ to $x_1$.
 Consequenty, $\mathbf x_{S\setminus \{y\}}\in J_1$, where $|S\setminus \{y\}|=r$. Hence $\mathbf x_S\in J_1$. Thus we have $J_2\subseteq J_1$. 
\end{proof}

\begin{proof}[Proof of (iii)]
 Let $\mathbf x_S\in J_1$. Then $|S|=r$ and $G[S\cup \{x_1\}]$ is a connected subgraph of $G$. Since $\{x_1\}\cup Y$ forms an independent set in $G$, we have $S\cap W\neq \emptyset$. Moreover, for each $y\in Y\cap S$, there exists some $w\in S\cap W$ such that $\{y,w\}\in E(G)$. Therefore, $\widetilde G[S]$ is connected as $W$ forms a clique in $\widetilde G$. Thus $\mathbf x_S\in I_{r-1}(\widetilde G)$ and hence $J_1\subseteq I_{r-1}(\widetilde G)$. Now let $\mathbf x_S\in I_{r-1}(\widetilde G)$. Then $|S|=r$ and $\widetilde G[S]$ is connected. Since $Y$ forms an independent set in $\widetilde G$, we see that for each $y\in Y\cap S$, there exists some $w\in S\cap W$ such that $\{y,w\}\in E(\widetilde G)$. Now $x_1$ is connected by an edge in $G$ to all the vertices in $W$. Hence $G[S\cup\{x_1\}]$ is connected and consequently, $I_{r-1}(\widetilde G)\subseteq J_1$. This completes the proof of (iii).    
\end{proof}

Now by induction on $n$, $J_2$ is a vertex splittable ideal since $G\setminus x_1$ is
co-chordal on fewer vertices. Also, since $J_1=I_{r-1}(\widetilde G)$ and $\widetilde G$ is a co-chordal graph (by Lemma \ref{aux lemma 1}), we see that $J_1$ is a vertex splittable ideal by induction on $r$. If $J_2=\langle 0 \rangle$, then by Lemma \ref{extra variable product}, $I_r(G)=x_1J_1$ is a vertex splittable ideal. Otherwise, by Definition \ref{vertex splittable ideal}, we see that $I_r(G)=x_1J_1+J_2$ is a vertex splittable ideal. 
\end{proof}

A {\it claw} is a graph on four vertices $\{x,y_1,y_2,y_3\}$ with edge set $\{\{x,y_1\},\{x,y_2\},\{x,y_3\}\}$. 
We say that a graph is {\it claw free} if it does not contain any claw as an induced subgraph. 
A graph $G$ is said to be {\it gap free} if $G^c$ does not contain $C_4$ as an induced subgraph.

\begin{corollary}\label{gap free corollary}
    Let $G$ be a gap free and claw free graph such that it contains a leaf. Then both $I_r(G)$ and $I_r(G^c)$ are vertex splittable ideals for any integer $r\ge 1$.
\end{corollary}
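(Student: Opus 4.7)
The plan is to reduce to Theorem \ref{general r case} by showing that $G$ is itself a split graph: once this is done, Proposition \ref{proposition 1} tells us that both $G$ and $G^c$ are chordal, i.e., both $G$ and $G^c$ are co-chordal, and Theorem \ref{general r case} applied to each will finish the job.

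Fix a leaf $v \in V(G)$ with unique neighbour $u$. I would first observe that $N_G(u) \setminus \{v\}$ is a clique of $G$: if two vertices $w_1, w_2 \in N_G(u) \setminus \{v\}$ were non-adjacent in $G$, then $\{u, v, w_1, w_2\}$ would induce a claw centred at $u$, contradicting claw-freeness. Next, set $I := V(G) \setminus N_G[u]$ and check that $I$ is independent in $G$: if $z_1, z_2 \in I$ with $\{z_1,z_2\} \in E(G)$, then the edges $\{u,v\}$ and $\{z_1,z_2\}$ are disjoint, while $\{v, z_1\}, \{v, z_2\} \notin E(G)$ (since $v$ is a leaf) and $\{u, z_1\}, \{u, z_2\} \notin E(G)$ (by definition of $I$). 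Hence $G[\{u, v, z_1, z_2\}] \cong 2K_2 = \overline{C_4}$, so $G^c$ contains an induced $C_4$, contradicting gap-freeness.

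These two observations combine to give a partition $V(G) = (N_G[u] \setminus \{v\}) \sqcup (\{v\} \cup I)$ of the desired split type. The first part is a clique of $G$: it is $\{u\}$ adjoined to the clique $N_G(u) \setminus \{v\}$, and $u$ is adjacent to every element of $N_G(u)$. The second part is independent in $G$, since $v$'s only neighbour $u$ lies in the first part and $I$ is independent. Hence $G$ is a split graph, and by Proposition \ref{proposition 1} both $G$ and $G^c$ are chordal. Consequently both $G$ and $G^c$ are co-chordal, so Theorem \ref{general r case} applied to each yields that $I_r(G)$ and $I_r(G^c)$ are vertex splittable for every $r \geq 1$.

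The argument is essentially structural; each of the three hypotheses (a leaf, claw-freeness, gap-freeness) is used exactly once to pin down one feature of $G$. The main (mild) thing to keep straight is which side of the split decomposition the vertices $u$ and $v$ end up in, but beyond this bookkeeping I do not anticipate any real obstacle.
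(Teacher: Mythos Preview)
Your proof is correct and follows essentially the same approach as the paper: both arguments show that $G$ is a split graph (clique part $N_G[u]\setminus\{v\}$, independent part $\{v\}\cup(V(G)\setminus N_G[u])$) using claw-freeness and gap-freeness respectively, and then invoke Proposition~\ref{proposition 1} and Theorem~\ref{general r case}. The paper treats the degenerate case $N_G(u)=\{v\}$ separately, but your unified argument covers it without modification.
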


\begin{proof}
    Let $x\in V(G)$ be a leaf of $G$ and let $N_G(x)=\{y\}$. If $N_G(y)=\{x\}$, then for any $u,v\in V(G)\setminus\{y,x\}$, $\{u,v\}\notin E(G)$. Because if $\{u,v\}\in E(G)$, then $\{y,x,u,v\}$ forms a cycle of length four in $G^c$, a contradiction. Thus $V(G)\setminus\{x,y\}$ forms an independent set in $G$. Therefore, by Proposition \ref{proposition 1}, both $G$ and $G^c$ are chordal. Hence by Theorem \ref{general r case}, $I_r(G)$ and $I_r(G^c)$ are vertex splittable ideals for $r\ge 1$. 
    
    Now let $N_G(y)=\{x,y_1,\ldots,y_t\}$ for some $t\ge 1$. For $i\neq j$, if $\{y_i,y_j\}\notin E(G)$ then $\{x,y,y_i,y_j\}$ forms a claw in $G$, which is not possible. Hence $\{y,y_1,\ldots,y_t\}$ forms a clique in $G$. Now let $a,b\in V(G)\setminus N_G[y]$. If $\{a,b\}\in E(G)$, then $\{a,b,x,y\}$ forms a cycle of length $4$ in $G^c$, a contradiction. Hence $V(G)\setminus N_G[y]$ forms an independent set in $G$. Consequently, $\{x\}\cup(V(G)\setminus N_G[y])$ forms an independent set in $G$. Let $Y=N_G[y]\setminus \{x\}$ and $W=\{x\}\cup (V(G)\setminus N_G[y])$. Then $V(G)=Y\sqcup W$ such that $G[Y]$ is a clique and $W$ forms an independent set in $G$. Therefore, by Proposition \ref{proposition 1}, both $I_r(G)$ and $I_r(G^c)$ are vertex splittable ideals for $r\ge 1$.
\end{proof}

A vertex splittable ideal has a linear resolution (by \cite[Theorem 2.4]{SF1}). Thus, as an application of Theorem \ref{general r case}, we see that one direction of Fr\"{o}berg's theorem is true in the context of Stanley-Reisner ideals of $r$-independence complexes of graphs:

\begin{corollary}\label{cor.froberg1}
    If $G$ is a co-chordal graph, then $I_r(G)$ has a $(r+1)$-linear resolution.
\end{corollary}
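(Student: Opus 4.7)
The plan is to reduce this corollary to a direct application of Theorem \ref{general r case} together with the known fact (cited from \cite[Theorem 2.4]{SF1}) that any vertex splittable ideal generated in a single degree admits a linear resolution in that degree. Since all the substantive work has already been carried out in the proof of Theorem \ref{general r case}, the corollary is essentially a one-line deduction; my proposal is simply to assemble the pieces carefully and check the degree bookkeeping.

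First I would invoke Theorem \ref{general r case} to conclude that $I_r(G)$ is a vertex splittable ideal of $R$. Next I would observe that by the description of the Stanley-Reisner ideal of $\mathrm{Ind}_r(G)$ given earlier, every minimal generator of $I_r(G)$ is of the form $\mathbf x_A$ where $A \subseteq V(G)$ has size exactly $r+1$ and $G[A]$ is connected. In particular, $I_r(G)$ is a homogeneous ideal all of whose minimal generators have degree $r+1$. Then I would apply \cite[Theorem 2.4]{SF1}, which asserts that a vertex splittable ideal has a linear resolution, to deduce that $\beta_{i,j}(I_r(G)) = 0$ whenever $j \neq i + (r+1)$. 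This is precisely the statement that $I_r(G)$ has an $(r+1)$-linear resolution.

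There is no real obstacle here: the only small point to be careful about is the degenerate cases noted in the remark following the definition of $I_r(G)$ (namely when $I_r(G) = \langle 0 \rangle$ or is principal), but in both situations the conclusion holds vacuously or trivially, and these cases are handled by clauses (i) and the base cases inside Definition \ref{vertex splittable ideal} and the proof of Theorem \ref{general r case}. Thus the proof reduces to a single sentence combining Theorem \ref{general r case} with \cite[Theorem 2.4]{SF1}.
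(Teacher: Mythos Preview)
Your proposal is correct and matches the paper's own argument exactly: the paper simply notes that a vertex splittable ideal has a linear resolution by \cite[Theorem 2.4]{SF1}, so the corollary follows immediately from Theorem \ref{general r case}. Your additional remarks on the degree of the generators and the degenerate cases are accurate but not strictly necessary, since \cite[Theorem 2.4]{SF1} already handles these.
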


\begin{remark}
The converse of \Cref{cor.froberg1} is not true in general. For example, if $r\ge |V(G)|$ and $G$ is any graph, then the ideal $I_{\mathrm{Ind}_r(G)}$ is the zero ideal and hence has a linear resolution. Moreover, if $r=|V(G)|-1$ then $I_{\mathrm{Ind}_r(G)}$ is the zero ideal if $G$ is not connected and is generated by the monomial $\prod_{x_i\in V(G)}x_i$ if $G$ is connected. Thus in both cases $I_{\mathrm{Ind}_r(G)}$ has a linear resolution. 

For another example, let $C_n=x_1x_2\cdots x_n$ be a cycle of length $n$. Consider the graph $H=C_n*\{x_{n+1}\}$, the graph formed by adding a new vertex $x_{n+1}$ and joining this vertex by an edge to all vertices in the cycle. Then $H$ is not chordal but for $n\ge 5$, $I_{\mathrm{Ind}_{n-1}(G)}=\langle x_1x_2\cdots x_n\rangle$ has a linear resolution, where $G=H^c$. In general, if $G$ is any graph such that its connected components have cardinality at most $s$, then for each $r\ge s$, $I_{\mathrm{Ind}_r(G)}$ is the zero ideal and hence has a linear resolution. Moreover, if $\mathcal C_1,\ldots,\mathcal C_k$ are connected components of $G$ such that $|V(\mathcal C_i)|=s$ for some $i$ and $|V(\mathcal C_j)|<s$ for each $j\neq i$, then $I_{\Indr{s-1}(G)}=\langle \prod_{x_i\in V(\mathcal C_i)}x_i \rangle$ and hence has a linear resolution.
\end{remark}

\section{The graded Betti numbers}\label{Betti numbers}

The vertex splittability property of a square-free monomial ideal gives important information about the graded Betti numbers. Namely, the Betti numbers of the ideal can be expressed as a sum of the Betti numbers of some `smaller' ideals. More precisely, we have the following theorem by Moradi and Khosh-Ahang. 

\begin{theorem}\textup{\cite[Theorem 2.8]{SF1}}\label{Betti splitting}
    Let $I = xJ_1+J_2\subseteq R$ be a vertex splitting for the monomial ideal $I$. Then the graded Betti numbers
of $R/I$ can be computed by the following recursive formula:
\[
    \beta_{i,j} (R/I ) = \beta_{i,j-1}(R/J_1) + \beta_{i,j} (R/J_2) + \beta_{i-1,j-1}(R/J_2)
~~\mbox{for all $i,j \geq 0$.}\]
\end{theorem}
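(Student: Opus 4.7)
The plan is to realize the decomposition $I = xJ_1+J_2$ as a \emph{Betti splitting} in the sense of Francisco--H\`a--Van Tuyl and then convert the resulting additive formula for $\beta_{i,j}(I)$ into the stated recursion for $R/I$. The two structural inputs from Definition \ref{vertex splittable ideal} are that $\mathcal{G}(I)=\mathcal{G}(xJ_1)\sqcup \mathcal{G}(J_2)$ and that $J_2\subseteq J_1$; both are essential.

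My first step would be to identify the intersection $xJ_1\cap J_2 = xJ_2$. Each generator of $xJ_1$ has the form $xm$ with $m\in \mathcal{G}(J_1)$ not involving $x$, while generators of $J_2$ also do not involve $x$. The monomial lcm therefore satisfies $\mathrm{lcm}(xm,n)=x\cdot\mathrm{lcm}(m,n)$, and the containment $J_2\subseteq J_1$ forces $\mathrm{lcm}(m,n)\in J_2$, so the intersection is generated by $xJ_2$. The reverse inclusion is immediate from $J_2\subseteq J_1$.

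Next I would set up the Mayer--Vietoris short exact sequence
\[
0\longrightarrow xJ_2\xrightarrow{f\mapsto (f,-f)} xJ_1\oplus J_2\xrightarrow{(f,g)\mapsto f+g} I\longrightarrow 0
\]
and apply $\mathrm{Tor}^R_\bullet(\mathbb K,-)$, producing a long exact sequence of multigraded $\mathbb K$-vector spaces. The crux is to show that all connecting homomorphisms vanish so the sequence breaks into short exact pieces
\[
0\to \mathrm{Tor}_i(xJ_2,\mathbb K)\to \mathrm{Tor}_i(xJ_1,\mathbb K)\oplus \mathrm{Tor}_i(J_2,\mathbb K)\to \mathrm{Tor}_i(I,\mathbb K)\to 0.
\]
Here the disjoint-generators hypothesis together with the fact that every generator of $xJ_1$ involves $x$ while no generator of $J_2$ does lets a multidegree/lcm-lattice bookkeeping argument force injectivity of the first map in each multidegree. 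Once the splitting is established, comparing multigraded dimensions yields $\beta_{i,j}(I)=\beta_{i,j}(xJ_1)+\beta_{i,j}(J_2)+\beta_{i-1,j}(xJ_2)$, and a degree shift gives $\beta_{i,j}(xJ_1)=\beta_{i,j-1}(J_1)$ and $\beta_{i-1,j}(xJ_2)=\beta_{i-1,j-1}(J_2)$. Converting between ideals and quotients via $\beta_{i,j}(R/I)=\beta_{i-1,j}(I)$ (and handling the $i=0$ case by hand) produces the stated recursion.

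The main obstacle is the vanishing of the Tor connecting maps: in general, Mayer--Vietoris does not split, so a general ``splitting'' decomposition need not be a Betti splitting. The hypothesis of vertex splittability is exactly tailored to guarantee this vanishing through the combination of disjoint minimal generators and the containment $J_2\subseteq J_1$, and verifying this carefully in each multidegree is where the real work lies.
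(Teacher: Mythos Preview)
The paper does not supply its own proof of this statement: it is quoted verbatim as \cite[Theorem 2.8]{SF1} and used as a black box, so there is no in-paper argument to compare against. Your proposal is therefore being measured against the original source, whose proof proceeds via the Betti-splitting machinery of Francisco--H\`a--Van Tuyl; your overall strategy is the right one.

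That said, there is a genuine gap in your write-up. From the Mayer--Vietoris long exact sequence
\[
\cdots \to \mathrm{Tor}_i(xJ_2)\xrightarrow{\alpha_i}\mathrm{Tor}_i(xJ_1)\oplus\mathrm{Tor}_i(J_2)\to \mathrm{Tor}_i(I)\xrightarrow{\delta_i}\mathrm{Tor}_{i-1}(xJ_2)\to\cdots
\]
you aim to show that the connecting maps $\delta_i$ vanish (equivalently, that $\alpha_i$ is injective), and you display the short exact sequence $0\to\mathrm{Tor}_i(xJ_2)\to\mathrm{Tor}_i(xJ_1)\oplus\mathrm{Tor}_i(J_2)\to\mathrm{Tor}_i(I)\to 0$. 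But that sequence gives $\beta_{i,j}(I)=\beta_{i,j}(xJ_1)+\beta_{i,j}(J_2)-\beta_{i,j}(xJ_2)$, with a \emph{minus} sign and no homological shift, which is not the formula you then assert. The Betti-splitting identity $\beta_{i,j}(I)=\beta_{i,j}(xJ_1)+\beta_{i,j}(J_2)+\beta_{i-1,j}(xJ_2)$ requires instead that the \emph{induced} maps $\alpha_i$ be \emph{zero} for all $i\ge 0$, so that the long exact sequence splices into $0\to\mathrm{Tor}_i(xJ_1)\oplus\mathrm{Tor}_i(J_2)\to\mathrm{Tor}_i(I)\to\mathrm{Tor}_{i-1}(xJ_2)\to 0$.

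Verifying $\alpha_i=0$ is also where your ``multidegree/lcm-lattice bookkeeping'' runs out. The component $\mathrm{Tor}_i(xJ_2)\to\mathrm{Tor}_i(J_2)$ is indeed zero by multidegree support (every class on the left involves $x$, none on the right does). However, the component $\mathrm{Tor}_i(xJ_2)\to\mathrm{Tor}_i(xJ_1)$ is, after untwisting by $x$, the map $\mathrm{Tor}_i(J_2)\to\mathrm{Tor}_i(J_1)$ induced by $J_2\subseteq J_1$, and this has no multidegree obstruction whatsoever. Its vanishing genuinely uses that $J_1$ and $J_2$ are themselves vertex splittable: one route is to invoke that vertex splittable ideals have linear quotients (Moradi--Khosh-Ahang, Theorem~2.4) and then appeal to the componentwise-linear criteria for Betti splittings; another is to run an induction through the recursive definition. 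Either way, the disjoint-generators condition and $J_2\subseteq J_1$ alone are not enough, so this step needs to be made explicit.
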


Using Theorem \ref{Betti splitting}, we can compute all the graded Betti numbers of Stanley-Reisner ideals of higher independence complexes of some well-known families of graphs, namely, the complete graphs, star graphs and the complement of path graphs. But before proceeding to derive the formulas we first make the following remark.

\begin{remark}
    In the calculations below, we make extensive use of Pascal's identity:
    \[
    \binom{n}{r} + \binom{n}{r-1} = \binom{n+1}{r}.
    \]
\end{remark}

The following theorem is well-known using the properties of complete intersections. 
\begin{theorem}\label{r=0 case}
    Let $I = \langle x_{i_1},\ldots,
    x_{i_k} \rangle \subset R$ be a monomial ideal generated by $k$ variables. Then the $\mathbb N$-graded Betti numbers of $R/I$ are given by the following formula: $\beta_{i,j}(R/I) =0$
    if $i \neq j$, and 
    \[
    \beta_{i,i}(R/I)=\begin{cases}
        \binom{k}{i}&\text{ for }0\le i\le k\\
        0 &\text{ otherwise.}
    \end{cases}
    \]
\end{theorem}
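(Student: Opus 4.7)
The plan is to proceed by induction on $k$, the number of variable generators of $I$, combining Lemma \ref{variable} (which says $I$ is vertex splittable) with the recursive Betti splitting formula of Theorem \ref{Betti splitting}.

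For the base case $k = 0$, we have $I = \langle 0 \rangle$, so $R/I = R$ and the only nonzero Betti number is $\beta_{0,0}(R) = 1 = \binom{0}{0}$, matching the claimed formula.

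For the inductive step, assuming the formula holds for ideals generated by $k-1$ variables, I would produce the vertex splitting
\[
I = x_{i_k}\cdot \langle 1 \rangle + \langle x_{i_1},\ldots,x_{i_{k-1}}\rangle,
\]
i.e.\ take $J_1 = \langle 1 \rangle$ and $J_2 = \langle x_{i_1},\ldots,x_{i_{k-1}}\rangle$. This is a valid splitting in the sense of Definition \ref{vertex splittable ideal}: $J_2 \subseteq J_1$, both are vertex splittable (the former by clause (i), the latter by Lemma \ref{variable}), and $\mathcal{G}(I) = \{x_{i_k}\} \sqcup \{x_{i_1},\ldots,x_{i_{k-1}}\}$ is the required disjoint union. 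Theorem \ref{Betti splitting} then gives
\[
\beta_{i,j}(R/I) = \beta_{i,j-1}(R/J_1) + \beta_{i,j}(R/J_2) + \beta_{i-1,j-1}(R/J_2).
\]
Since $R/J_1 = 0$, the first summand vanishes identically. By the inductive hypothesis, $\beta_{i,j}(R/J_2)$ equals $\binom{k-1}{i}$ when $i = j$ with $0 \le i \le k-1$ and is zero otherwise.

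On the diagonal $i = j$ the formula collapses to
\[
\beta_{i,i}(R/I) = \binom{k-1}{i} + \binom{k-1}{i-1} = \binom{k}{i}
\]
by Pascal's identity, with the conventions $\binom{k-1}{-1} = \binom{k-1}{k} = 0$ handling the endpoints $i = 0$ and $i = k$. Off the diagonal, both surviving terms vanish because $\beta_{i,j}(R/J_2)$ and $\beta_{i-1,j-1}(R/J_2)$ are each supported on the diagonal by induction, so $\beta_{i,j}(R/I) = 0$ for $i \neq j$. I do not anticipate a real obstacle here: once the vertex splitting is chosen and the base case is verified, the argument is a one-line application of Pascal's identity, which is precisely why the authors flagged that identity in the remark preceding the theorem.
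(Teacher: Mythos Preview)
Your proposal is correct and follows essentially the same approach as the paper: induction on $k$ using the vertex splitting $I = x_{i_k}\langle 1\rangle + \langle x_{i_1},\ldots,x_{i_{k-1}}\rangle$ together with Theorem~\ref{Betti splitting} and Pascal's identity. The only cosmetic difference is that the paper starts the induction at $k=1$ (with the explicit two-term resolution of $R/\langle x_1\rangle$) rather than at $k=0$, and it suppresses the vanishing $\beta_{i,j-1}(R/J_1)$ term without comment.
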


 We now give explicit formulas for all the Betti numbers of the $r$-independence complexes of complete graphs. Note that the formula is well-known in the case of $r=1$ and it was first derived in the thesis of Jacques \cite[Theorem 5.1.1]{SJ}. 
 We provide two different proofs; one using the formula in \Cref{Betti splitting} and another using the Hilbert series of the ideal.

\begin{theorem}\label{complete graph case}
    Let $I_r(K_n)$ denote the Stanley-Reisner ideal of the $r$-independence complex of $K_n$ for $r\ge 1$. Then the $\mathbb N$-graded Betti numbers of $R/I_r(K_n)$ can be expressed as follows:  $\beta_{i,j}(R/I_r(K_n)) =0$ if
    $j \neq i+r$, and 
\[
\beta_{i,i+r}(R/I_r(K_n))=\begin{cases}
    \binom{i+r-1}{r}\binom{n}{i+r} &\text{ for }1\le i\le n-r\\
    0 &\text{ otherwise.}
\end{cases}
\]  
\end{theorem}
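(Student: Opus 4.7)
The plan is to exploit the vertex splitting produced in the proof of Theorem \ref{general r case} and iterate the Betti-splitting recursion of Theorem \ref{Betti splitting}. Since $K_n^c$ is edgeless, every vertex is trivially simplicial in $K_n^c$; taking $x_1$ we have $Y = N_{K_n^c}(x_1) = \emptyset$ and $W = N_{K_n}(x_1) = \{x_2,\ldots,x_n\}$, so the construction from Section~\ref{vertex spllitable} gives $\widetilde{K_n} = K_{n-1}$ and $K_n\setminus x_1 = K_{n-1}$. The vertex splitting therefore takes the especially clean form
\[
I_r(K_n) \;=\; x_1\cdot I_{r-1}(K_{n-1}) + I_r(K_{n-1}),
\]
which one also sees directly by partitioning the degree-$(r{+}1)$ squarefree monomial generators according to whether or not they contain $x_1$.

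Applying Theorem \ref{Betti splitting} in bidegree $(i,\,i+r)$ gives
\[
\beta_{i,i+r}(R/I_r(K_n)) = \beta_{i,i+r-1}(R/I_{r-1}(K_{n-1})) + \beta_{i,i+r}(R/I_r(K_{n-1})) + \beta_{i-1,i+r-1}(R/I_r(K_{n-1})),
\]
and I would induct on $n$, treating all $r \geq 1$ simultaneously. The easy base cases are $n \le r$ (where $I_r(K_n) = \langle 0 \rangle$ and the stated range $1 \le i \le n-r$ is empty) and $n = r+1$ (where $I_r(K_n) = \langle x_1\cdots x_n\rangle$ has a single-step resolution giving $\beta_{1,r+1} = 1 = \binom{r}{r}\binom{r+1}{r+1}$). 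When $r = 1$ the first summand above involves the degenerate object $I_0(K_{n-1}) = \langle x_2,\ldots,x_n\rangle$, whose graded Betti numbers $\beta_{i,i}(R/I_0(K_{n-1})) = \binom{n-1}{i}$ are supplied by Theorem \ref{r=0 case}; this is precisely the input needed to initialize the $r$-direction.

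For the inductive step, substituting the induction hypothesis into the three terms yields
\[
\binom{i+r-2}{r-1}\binom{n-1}{i+r-1} + \binom{i+r-1}{r}\binom{n-1}{i+r} + \binom{i+r-2}{r}\binom{n-1}{i+r-1}.
\]
The first and third summands share the factor $\binom{n-1}{i+r-1}$, and Pascal's identity $\binom{i+r-2}{r-1}+\binom{i+r-2}{r}=\binom{i+r-1}{r}$ collapses them into $\binom{i+r-1}{r}\binom{n-1}{i+r-1}$; a second application $\binom{n-1}{i+r-1}+\binom{n-1}{i+r}=\binom{n}{i+r}$ then delivers $\binom{i+r-1}{r}\binom{n}{i+r}$. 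The vanishing of $\beta_{i,i+r}$ outside $1\le i\le n-r$ is automatic from the binomial factors, and the vanishing of $\beta_{i,j}$ for $j\neq i+r$ follows from the $(r{+}1)$-linearity guaranteed by Corollary \ref{cor.froberg1}. The only mild bookkeeping is the boundary index $i = n-r$ (where one induction summand vanishes on its own) and dovetailing the $r = 1$ case with Theorem \ref{r=0 case}; neither is a real obstacle.
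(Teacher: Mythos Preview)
Your proposal is correct and follows essentially the same route as the paper: the same vertex splitting $I_r(K_n)=x_1 I_{r-1}(K_{n-1})+I_r(K_{n-1})$, the same application of Theorem~\ref{Betti splitting}, and the same two-step Pascal computation. The only cosmetic difference is that the paper organizes the induction by handling $r=1$ separately (via Theorem~\ref{r=1 case}) rather than through your convenient but not-formally-defined shorthand $I_0(K_{n-1})=\langle x_2,\ldots,x_n\rangle$.
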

\begin{proof}
    We prove this by induction on $n$ and $r$. For a fixed $r$, if $n\in \{1,2,\ldots,r\}$ then $I_r(K_n)=\langle 0\rangle$. Hence we have the above formula. Now fix an $n$. We first prove the above formula for $r=1$. By Theorem \ref{r=1 case} we have 
    \[
    I_1(K_n)=x_1J_1+J_2,
    \]
where $J_1=\langle x_2,\ldots,x_n \rangle$ and $J_2=I_1(K_{n}\setminus\{x_1\})$. Note that $K_{n}\setminus\{x_1\}=K_{n-1}$. Therefore, using Theorem \ref{Betti splitting} and by the induction on $n$, we have, for $1\le i\le n-1$,
\begin{align*}
\beta_{i,i+1}(R/I_1(K_n))=& \beta_{i,i}(R/J_1)+\beta_{i,i+1}(R/I_1(K_{n-1}))+\beta_{i-1,i}(R/I_1(K_{n-1}))\\
&=\binom{n-1}{i}+i\binom{n-1}{i+1}+(i-1)\binom{n-1}{i}\hspace{1cm}(\text{by Theorem \ref{r=0 case}})\\
&=\binom{i}{1}\binom{n}{i+1}.
\end{align*}

\noindent
Now we take any $r\ge 2$. Then by the construction in Theorem \ref{general r case},
\begin{align*}
    I_r(K_n)=x_1J_1+J_2,
\end{align*}
where $J_1=I_{r-1}(\widetilde{K_{n}})$ and $J_2=I_{r}(K_{n-1})$. Observe that $\widetilde{K_{n}}=K_{n-1}$. Hence using Theorem \ref{Betti splitting} and by the induction on $n$ and $r$, we have, for $1\le i\le n-r$, 
\begin{align*}
    &\beta_{i,i+r}(R/I_r(K_n))\\
    &=\beta_{i,i+r-1}(R/I_{r-1}(K_{n-1}))+\beta_{i,i+r}(R/I_{r}(K_{n-1}))+\beta_{i-1,i+r-1}(R/I_{r}(K_{n-1}))\\
    &=\binom{i+r-2}{r-1}\binom{n-1}{i+r-1}+\binom{i+r-1}{r}\binom{n-1}{i+r}+\binom{i+r-2}{r}\binom{n-1}{i+r-1}\\
    &=\binom{i+r-1}{r}\binom{n}{i+r}
\end{align*}
\end{proof}

\begin{remark}
    The ideal $I_r(K_n)$ is same as the edge ideal of the $(r+1)$-complete hypergraph $K_n^{r+1}$ defined by Emtander \cite{Emtander}. Thus Theorem \ref{complete graph case} shows that the edge ideal of the complete hypergraph $K_n^{r+1}$ is vertex splittable.
\end{remark}

 \begin{example}
   Let $G=K_7$ and $r=3$. Then the Betti table of $R/I_3(K_7)$ computed using Theorem \ref{complete graph case} as follows.
    \begin{figure}[!ht]\
\centering
\begin{tikzpicture}
[scale=.45]
\draw [fill] (4,0) circle [radius=0.1];
\draw [fill] (7,0) circle [radius=0.1];
\draw [fill] (9,2) circle [radius=0.1];
\draw [fill] (9,4) circle [radius=0.1];
\draw [fill] (2,2) circle [radius=0.1];
\draw [fill] (2,4) circle [radius=0.1];
\draw [fill] (5.5,7) circle [radius=0.1];
\node at (4,-0.8){$x_1$};
\node at (7,-0.8){$x_2$};
\node at (10,2){$x_3$};
\node at (10,4){$x_4$};
\node at (5.5,8){$x_5$};
\node at (1,4){$x_6$};
\node at (1,2){$x_7$};
\draw (4,0)--(7,0)--(9,2)--(2,2)--(2,4)--(5.5,7)--(4,0)--(9,2)--(2,2)--(5.5,7)--(7,0)--(9,4)--(2,4)--(4,0)--(9,4)--(2,2)--(4,0)--(2,2)--(7,0)--(2,4)--(9,2)--(5.5,7)--(9,4)--(9,2);

\draw [fill] (16,7) circle [radius=0.01];
\draw [fill] (16,8.5) circle [radius=0.01];
\draw (16,7)--(27,7);
\draw (16,8.5)--(27,8.5);
\node at (18,7.8){$0$};
\node at (19.4,7.8){$1$};
\node at (21.2,7.8){$2$};
\node at (23.3,7.8){$3$};
\node at (25.5,7.8){$4$};
\node at (16.5,6.1){$0:$};
\node at (16.5,4.4){$1:$};
\node at (16.5,2.6){$2:$};
\node at (18,6){$1$};
\node at (19.4,6){$\cdot$};
\node at (21.2,6){$\cdot$};
\node at (23.3,6){$\cdot$};
\node at (25.5,6){$\cdot$};
\node at (18,4.3){$\cdot$};
\node at (19.4,4.3){$\cdot$};
\node at (21.2,4.3){$\cdot$};
\node at (23.3,4.3){$\cdot$};
\node at (25.5,4.3){$\cdot$};
\node at (18,2.5){$\cdot$};
\node at (19.4,2.5){$\cdot$};
\node at (21.2,2.5){$\cdot$};
\node at (23.3,2.5){$\cdot$};
\node at (25.5,2.5){$\cdot$};
\node at (16.5,0.8){$3:$};
\node at (18,0.7){$\cdot$};
\node at (19.4,0.7){$35$};
\node at (21.2,0.7){$84$};
\node at (23.3,0.7){$70$};
\node at (25.5,0.7){$20$};
\end{tikzpicture}
\caption{ The graph $K_7$ and the Betti table of $R/I_3(K_7)$}
\end{figure}
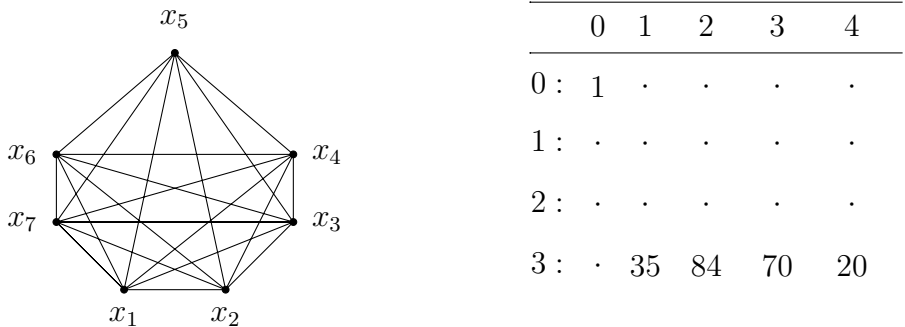
 \end{example}

Recall that, if $M$ is an $\mathbb N$-graded $R$ module, then the Hilbert series $H_M(t)$ measures the $\mathbb K$-vector space dimensions of the graded pieces $M_i$ of $M$. More specifically, if each graded piece $M_i$ has finite $\mathbb K$-vector space dimension, then $H_M(t)$ is the formal power series 
 \[
 H_M(t)=\sum_{i\in\mathbb N}\dim_{\mathbb K}(M_i)t^i.
 \]

The following result is well-known. See, for example, \cite[Section 6.1.3]{HH}.

\begin{lemma}
    Let $R$ be the polynomial ring $\mathbb K[x_1,\ldots,x_n]$, and consider a finitely generated $\mathbb N$-graded $R$-module $M$. Then
    \[
    H_M(t)=\frac{R_M(t)}{(1-t)^n},
    \]
    where $R_M(t)=\sum_{i}(-1)^i\sum_j\beta_{i,j}(M)t^j$.
\end{lemma}
 
Note that, if $M$ is the Stanley–Reisner ring of a simplicial complex $\Delta$, then the Hilbert series of $M$ can be calculated from combinatorial data of $\Delta$.
In particular, we have the following formula (see \cite[Chapter 6]{Vill}):
\[
H_{R/I_{\Delta}}=\frac{1}{(1-t)^n}\sum_{s=0}^l f_{s-1} t^s (1-t)^{n-s},
\]
where $f_{s}$ equals the number of $s$-dimensional faces of $\Delta$ and $l=\dim\Delta+1$.

If $R/I_{\Delta}$ has a linear minimal free resolution, then the Betti numbers of $R/I_{\Delta}$ can be easily deduced from the Hilbert series. 
Let $G$ be a co-chordal graph, then $R/I_r(G)$ has a linear minimal free resolution (by \Cref{cor.froberg1}). 
Thus by \cite[Lemma 3.11]{Emtander} we get the following formula.

 \begin{lemma}\label{Hilbert series}
      If $G^c$ is a chordal graph, then the Betti numbers $\beta_{i,i+r}(R/I_r(G))$ are given by the formula
     \begin{align}\label{alternate formula}
     \beta_{i,i+r}(R/I_r(G))=\sum_{s=0}^l(-1)^{r-s}f_{s-1}\binom{n-s}{i+r-s},
     \end{align}
     where $f_{s}$ equals the number of $s$-dimensional faces of $\Indr{r}(G)$ and $l=\dim\Indr{r}(G)+1$. 
 \end{lemma}

{Lemma \ref{Hilbert series},
like Theorem \ref{Betti splitting}, provides us with a tool to compute the graded Betti
numbers of $R/I_r(G)$ when $G$ is co-chordal.  By playing the two results off each other, we may be able to derive some combinatorial identities.  
}

{As a specific example, if 
we take $G=K_{n}$, then $f_s(\Indr{r}(K_n))=\binom{n}{s+1}$ and $l=r$.  Thus
\[\beta_{i,i+r}(R/I_r(G))=
\sum_{s=0}^{r}(-1)^{r-s}\binom{n}{s}\binom{n-s}{i+r-s}.
 \]
 We also computed these graded
Betti numbers in \Cref{complete graph case}
by using Theorem \ref{Betti splitting}.
Thus, since these two expressions are equal,
we can derive the combinatorial identity:
 \[
 \binom{i+r-1}{r}\binom{n}{i+r}=\sum_{s=0}^{r}(-1)^{r-s}\binom{n}{s}\binom{n-s}{i+r-s}.
 \]
See also \cite[Section 3.3]{Emtander} for a combinatorial proof.}

If $G=K_{n_1,\ldots,n_s}$, the complete multipartite graph on $N_s:=\sum_{t=1}^sn_t$ number of vertices, then the ideal $I_r(G)$ is same as the edge ideal of 
the $(r+1)$-complete multipartite hypergraph $K_{n_1,\ldots,n_s}^{r+1}$ as defined by Emtander \cite[Section 3.1]{Emtander}. 
Hence, by Corollary \ref{cor.froberg1} we see that the edge ideal of the $(r+1)$-complete multipartite hypergraph $K_{n_1,\ldots,n_s}^{r+1}$ is vertex splittable. 
Also, we have the formula for the Betti numbers of $I_r(K_{n_1,\ldots,n_s})$.

 \begin{theorem}\textup{\cite[cf. Theorem 3.5]{Emtander}}\label{bipartite}
Let $G = K_{n_1,n_2,\ldots,n_s}$ denote the complete multipartite graph on $N_s$ number of vertices. 
Then for all positive integers $r$, the $\mathbb N$-graded Betti numbers of the ideal $I_r(G)$ can be expressed as follows:
    {\footnotesize
    \[
\beta_{i,i+d}(R_{G}/I_r(G))=\begin{cases}
    \sum_{t=1}^s\binom{n_t}{i+r}\binom{i+r-1}{r}-\sum_{J(r,s)} \left[\prod_{t=1}^s\binom{n_t}{j_t}\right]\sum_{t=1}^s\binom{j_t-1}{r}&\text{ for }d=r,\\
    0 &\text{ otherwise.}
\end{cases}
\]  }
For brevity we let $J(r,s)$ denote all those tuples $(j_1,j_2,\ldots,j_s)\in\mathbb N^s$ such that $\sum_{t=1}^sj_t=i+r$.
\end{theorem}
 Let $P_n^c$ denote the complement of path graph on $n$ vertices. We now proceed to give formulas for the Betti numbers of $R/I_r(P_n^c)$ using the vertex splittability of the ideal $I_r(P_n^c)$. Note that if $r=1$ then $I_1(P_n^c)$ is the edge ideal of $P_n^c$. To the best of our knowledge, the formula for $\beta_{i,j}(R/I_1(P_n^c))$ in \Cref{path complement formula} is new.
 
 In order to find the Betti numbers of $R/I_r(P_n^c)$, we first analyze the graph $K_n^x$ defined as follows:
\begin{align*}
    V(K_n^x)&=\{x,x_1,x_2,\ldots,x_n\}\\
    E(K_n^x)&=\{\{\{x_i,x_j\}\mid 1\le i<j\le n\}\cup\{\{x,x_i\}\mid i\in [n-1]\}.
\end{align*}
Note that the graph $K_n^x$ is the 
graph $K_{n+1}$ on the vertex set $\{x,x_1,\ldots,x_n\}$ with the edge $\{x,x_n\}$ removed.
For $r\ge 1$, consider the ideal $I_r(K_n^x)$ in the polynomial ring $R_x=\mathbb K[x,x_1,x_2,\ldots,x_n]$.

\begin{lemma}\label{lemma for path complement}
    The $\mathbb N$-graded Betti numbers of $R_x/I_r(K_n^x)$ can be expressed as follows:
\[
\beta_{i,i+r}(R_x/I_r(K_n^x))=\begin{cases}
    i\binom{n+1}{i+1}-\binom{n-1}{i-1}, &\text{ for }r=1\text{ and }1\le i\le n,\\
    \binom{i+r-1}{r}\binom{n+1}{i+r}, &\text{ for }r\ge 2\text{ and }1\le i\le n-r+1,\\
    0, &\text{otherwise}.
\end{cases}
\]
\end{lemma}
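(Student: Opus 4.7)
The plan is to run the vertex splitting construction of Theorem \ref{general r case} on $K_n^x$ exactly once, identify the resulting pieces $J_1$ and $J_2$ explicitly, and then apply the recursive formula of Theorem \ref{Betti splitting} using the Betti-number formulas from Theorems \ref{r=0 case} and \ref{complete graph case}.

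First I would observe that $(K_n^x)^c$ consists of a single edge $\{x,x_n\}$ together with the $n-1$ isolated vertices $x_1,\ldots,x_{n-1}$, hence is chordal, so $K_n^x$ is co-chordal and $x$ is a simplicial vertex of $(K_n^x)^c$. In the notation of the construction in Theorem \ref{general r case} we have $Y = N_{(K_n^x)^c}(x) = \{x_n\}$ and $W = \{x_1,\ldots,x_{n-1}\}$; since the induced subgraph $K_n^x\setminus x$ is already the complete graph $K_n$, there are no non-edges among $W$ to promote, so no new edges are added and $\widetilde{K_n^x} = K_n$. The splitting $I_r(K_n^x) = xJ_1 + J_2$ therefore specializes to $J_2 = I_r(K_n)$ for every $r \geq 1$, with $J_1 = \langle x_1,\ldots,x_{n-1}\rangle$ when $r=1$ (these are precisely the neighbours of $x$ in $K_n^x$) and $J_1 = I_{r-1}(K_n)$ when $r \geq 2$.

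Applying Theorem \ref{Betti splitting} gives
\[
\beta_{i,i+r}(R_x/I_r(K_n^x)) = \beta_{i,i+r-1}(R/J_1) + \beta_{i,i+r}(R/J_2) + \beta_{i-1,i+r-1}(R/J_2).
\]
For $r\geq 2$, substituting Theorem \ref{complete graph case} produces
\[
\binom{i+r-2}{r-1}\binom{n}{i+r-1} + \binom{i+r-1}{r}\binom{n}{i+r} + \binom{i+r-2}{r}\binom{n}{i+r-1},
\]
and two applications of Pascal's identity -- first $\binom{i+r-2}{r-1}+\binom{i+r-2}{r}=\binom{i+r-1}{r}$ to merge the first and third summands, then $\binom{n}{i+r-1}+\binom{n}{i+r}=\binom{n+1}{i+r}$ to combine what remains -- collapse the expression to $\binom{i+r-1}{r}\binom{n+1}{i+r}$, as required.

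For $r=1$, combining Theorem \ref{r=0 case} (giving $\beta_{i,i}(R/J_1)=\binom{n-1}{i}$) with Theorem \ref{complete graph case} yields
\[
\binom{n-1}{i}+i\binom{n}{i+1}+(i-1)\binom{n}{i},
\]
and a short rearrangement -- expanding $i\binom{n+1}{i+1}=i\binom{n}{i+1}+i\binom{n}{i}$ and using $\binom{n-1}{i-1}+\binom{n-1}{i}=\binom{n}{i}$ -- rewrites this as $i\binom{n+1}{i+1}-\binom{n-1}{i-1}$. The vanishing outside the stated ranges of $i$ is immediate from the corresponding vanishing in Theorems \ref{r=0 case} and \ref{complete graph case}. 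The proof contains no real obstacle; the one subtlety worth flagging is that for $r=1$ one cannot invoke the identification $J_1 = I_{r-1}(\widetilde G)$ from part (iii) of the proof of Theorem \ref{general r case} (which would introduce the unwanted generator $x_n \in I_0(K_n)$), and one must instead read $J_1$ off directly from the definition as the ideal generated by the neighbours of $x$.
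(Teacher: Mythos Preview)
Your proof is correct and follows essentially the same route as the paper: you identify the same simplicial vertex $x$, obtain the same splitting with $J_2=I_r(K_n)$ and $J_1=\langle x_1,\ldots,x_{n-1}\rangle$ (for $r=1$) or $J_1=I_{r-1}(K_n)$ (for $r\ge 2$), and then apply Theorem~\ref{Betti splitting} together with Theorems~\ref{r=0 case} and~\ref{complete graph case} and the same Pascal-identity simplifications. Your explicit remark that $\widetilde{K_n^x}=K_n$ because $K_n^x\setminus x$ is already complete, and your caution about not reading $J_1$ as $I_0(\widetilde G)$ when $r=1$, match exactly what the paper does implicitly.
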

\begin{proof}
 Note that $K_n^x$ is a co-chordal graph with $x$ as a simplicial vertex of the complement of $K_n^x$. Thus by 
Theorem \ref{general r case}, $I_r(K_n^x)$ is a vertex splittable ideal. 
Let $r=1$. Then
\[
I_1(K_n^x)=xJ_1+J_2,
\]
where $J_1=\langle x_1,\ldots,x_{n-1}\rangle$ and $J_2=I_1(K_n^x\setminus\{x\})$. Note that $K_n^x\setminus\{x\}=K_n$. Therefore, by Theorem \ref{Betti splitting},
\begin{align*}
    \beta_{i,i+1}(R_x/I_1(K_n^x))&= \beta_{i,i}(R_x/J_1)+\beta_{i,i+1}(R_x/I_1(K_{n}))+\beta_{i-1,i}(R_x/I_1(K_{n}))\\
    &=\binom{n-1}{i}+i\binom{n}{i+1}+(i-1)\binom{n}{i}\\
    &=i\binom{n+1}{i+1}-\binom{n-1}{i-1}.
\end{align*}
Now let $r\ge 2$. Then by the construction in Theorem \ref{general r case},
 \[
 I_r(K_n^x)=xJ_1+J_2,
 \]
where $J_1=I_{r-1}(\widetilde{K_n^x})$ and $J_2=I_r(K_n^x\setminus\{x\})$. Note that $\widetilde{K_n^x}=K_n^x\setminus\{x\}=K_n$. Therefore, by Theorem \ref{Betti splitting},
 \begin{align*}
     &\beta_{i,i+r}(R_x/I_r(K_n^x))\\
     &= \beta_{i,i+r-1}(R_x/I_{r-1}(K_{n}))+\beta_{i,i+r}(R_x/I_{r}(K_{n}))+\beta_{i-1,i+r-1}(R_x/I_r(K_{n}))\\
     &=\binom{i+r-2}{r-1}\binom{n}{i+r-1}+\binom{i+r-1}{r}\binom{n}{i+r}+\binom{i+r-2}{r}\binom{n}{i+r-1}\\
     &=\binom{i+r-1}{r}\binom{n+1}{i+r}.
 \end{align*}
 
 \end{proof}

 \begin{remark}
     Note that $\beta_{i,i+r}(R_x/I_r(K_n^x))=\beta_{i,i+r}(R/I_r(K_{n+1}))$ for $r\ge 2$.
 \end{remark}

 Now we are ready to give explicit formulas for the Betti numbers $\beta_{i,i+r}(R/I_r(P_n^c))$.

 \begin{theorem}\label{path complement formula}
     Let $I_r(P_n^c)$ denote the Stanley-Reisner ideal of the $r$-independence complex of the complement of the path graph $P_n$ on $n$ vertices $\{x_1,\ldots,x_n\}$. Then the $\mathbb N$-graded Betti numbers of $R/I_r(P_n^c)$ can be expressed as follows:  $\beta_{i,j}(R/I_r(P_n^c))=0$
     if $j \neq i+r$, and 
     \begin{align*}
         \beta_{i,i+r}(R/I_r(P_n^c))=\begin{cases}
             i\binom{n-1}{i+1},&\text{ for }r=1\text{ and }1\le i\le n-2,\\
             \binom{i+1}{2}\binom{n}{i+2}-i\binom{n-2}{i},&\text{ for }r=2\text{ and }1\le i\le n-2,\\
             \binom{i+r-1}{r}\binom{n}{i+r},&\text{ for }r\ge 3\text{ and }1\le i\le n-r,\\
             0,&\text{ otherwise.}
         \end{cases}
     \end{align*}
 \end{theorem}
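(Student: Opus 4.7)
The plan is to induct on $n$, with $r \geq 1$ fixed, exploiting the vertex splitting from Theorem \ref{general r case}. Since $P_n$ is chordal, $P_n^c$ is co-chordal; taking $x_1$ as a simplicial vertex of $(P_n^c)^c = P_n$, Theorem \ref{general r case} gives $I_r(P_n^c) = x_1 J_1 + J_2$, where $J_2 = I_r(P_n^c \setminus x_1) = I_r(P_{n-1}^c)$ on the variables $\{x_2,\dots,x_n\}$.

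The first step is to identify $J_1$. With $Y = N_{P_n}(x_1) = \{x_2\}$ and $W = N_{P_n^c}(x_1) = \{x_3,\dots,x_n\}$, the construction preceding Theorem \ref{general r case} produces the graph $\widetilde{G}$ on $\{x_2,\dots,x_n\}$ in which $\{x_3,\dots,x_n\}$ is a clique and $x_2$ is adjacent to each of $x_4,\dots,x_n$ but not to $x_3$. In the notation of Lemma \ref{lemma for path complement}, this is precisely $K_{n-2}^{x_2}$. Thus $J_1 = I_{r-1}(K_{n-2}^x)$ for $r \geq 2$, while for $r = 1$ one reads off directly from the definition that $J_1 = \langle x_3,\dots,x_n\rangle$.

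Applying Theorem \ref{Betti splitting} yields
\begin{equation*}
\beta_{i,i+r}(R/I_r(P_n^c)) = \beta_{i,i+r-1}(R/J_1) + \beta_{i,i+r}(R/J_2) + \beta_{i-1,i+r-1}(R/J_2).
\end{equation*}
I would then substitute the closed forms for $\beta(R/J_1)$ from Theorem \ref{r=0 case} (when $r = 1$) or Lemma \ref{lemma for path complement} (when $r \geq 2$), and the inductive closed forms for $\beta(R/J_2)$, and simplify with Pascal's identity. The three cases of the theorem correspond exactly to the three regimes arising on the right-hand side: $r = 1$ uses Theorem \ref{r=0 case}, $r = 2$ uses the $r=1$ branch of Lemma \ref{lemma for path complement} (whose $-\binom{n-3}{i-1}$ term produces the correction $-i\binom{n-2}{i}$), and $r \geq 3$ uses the clean branch of Lemma \ref{lemma for path complement}. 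For instance, in the $r \geq 3$ case the sum
\begin{equation*}
\binom{i+r-2}{r-1}\binom{n-1}{i+r-1} + \binom{i+r-1}{r}\binom{n-1}{i+r} + \binom{i+r-2}{r}\binom{n-1}{i+r-1}
\end{equation*}
collapses via $\binom{i+r-2}{r-1} + \binom{i+r-2}{r} = \binom{i+r-1}{r}$ and then $\binom{n-1}{i+r-1} + \binom{n-1}{i+r} = \binom{n}{i+r}$ to $\binom{i+r-1}{r}\binom{n}{i+r}$. The $r = 1$ case collapses via the single identity $i\binom{n-2}{i} + i\binom{n-2}{i+1} = i\binom{n-1}{i+1}$, and the $r = 2$ case follows the same pattern as $r \geq 3$ with the correction terms carried along.

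Base cases: for $n \leq r+1$, the ideal $I_r(P_n^c)$ is either zero or principal and can be checked directly against the formula (both sides vanish outside the stated index ranges). The main bookkeeping obstacle is tracking the $-i\binom{n-2}{i}$ correction in the $r = 2$ formula through the recursion: one must combine the $-\binom{n-3}{i-1}$ from $J_1$ with the inductive $-i\binom{n-3}{i}$ and $-(i-1)\binom{n-3}{i-1}$ pieces from $J_2$, and recognize the sum as $-i\binom{n-2}{i}$ after one application of Pascal. All other simplifications are routine.
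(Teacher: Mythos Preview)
Your proposal is correct and follows essentially the same route as the paper's own proof: the same vertex splitting at the simplicial vertex $x_1$ of $P_n$, the same identification $\widetilde{P_n^c}\cong K_{n-2}^x$, the same appeal to Lemma~\ref{lemma for path complement} for $J_1$ and induction on $n$ for $J_2$, and the same Pascal-identity simplifications in each of the three regimes $r=1$, $r=2$, $r\ge 3$. Your bookkeeping of the $r=2$ correction term matches the paper's computation exactly.
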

 \begin{proof}
     The graph $P_n^c$ is a co-chordal graph with $x_1$ as a simplicial vertex of $P_n$. We first consider the case $r=1$. By Theorem \ref{r=1 case},
     \[
     I_1(P_n^c)=x_1J_1+J_2,
     \]
 where $J_1=\langle x_3,x_4,\ldots,x_n \rangle$ and $J_2=I_1(P_{n}^c\setminus\{x_1\})$. Thus $J_2=I_1(P_{n-1}^c)$ since $P_{n}^c\setminus\{x_1\}=P_{n-1}^c$. We now proceed to prove the formula by induction on $n$. For $n=1,2$, $I_1(P_n^c)=\langle 0\rangle$ and thus we have the above formula. For $n=3$, $I_1(P_n^c)=\langle x_1x_3 \rangle=I_1(K_2)$ and hence using Theorem \ref{complete graph case} we have our desired formula. Now let $n\ge 4$. Then by Theorem \ref{Betti splitting} and using the induction hypothesis we have,
\begin{align*}
    \beta_{i,i+1}(R/I_r(P_n^c))&=\beta_{i,i}(R/J_1)+\beta_{i,i+1}(R/I_1(P_{n-1}^c))+\beta_{i-1,i}(R/I_1(P_{n-1}^c))\\
    &=\binom{n-2}{i}+i\binom{n-2}{i+1}+(i-1)\binom{n-2}{i}\\
    &=i\binom{n-1}{i+1}.
\end{align*}
Now we consider the case $r=2$. By Theorem \ref{general r case} we have,
\[
I_2(P_n^c)=x_1J_1+J_2,
\]
 where $J_1=I_1(\widetilde{P_n^c})$ and $J_2=I_2(P_{n-1}^c)$. Note that $\widetilde{P_n^c}\cong K_{n-2}^x$. Now, for $n=1,2,3$, we have $I_1(\widetilde{P_n^c})=I_2(P_{n-1}^c)=\langle 0\rangle$. For $n=4$, we have $J_1=I_1(K_2^x)$ and $J_2=I_2(P_3^c)=\langle 0\rangle$. Therefore, by Theorem \ref{Betti splitting} and Lemma \ref{lemma for path complement},
\begin{align*}
\beta_{i.i+2}(R/I_2(P_4^c))&=\beta_{i,i+1}(R/I_1(K_2^x))+\beta_{i,i+1}(R/I_2(P_{3}^c))+\beta_{i-1,i}(R/I_2(P_{3}^c))\\
&=i\binom{3}{i+1}-\binom{1}{i-1}.
\end{align*}

\noindent
It is not difficult to see that for all non-negative integers $i$, $i\binom{3}{i+1}-\binom{1}{i-1}=\binom{i+1}{2}\binom{4}{i+2}-i\binom{2}{i}$. Thus we have $\beta_{i,i+2}(R/I_2(P_4^c))=\binom{i+1}{2}\binom{4}{i+2}-i\binom{2}{i}$. Now let $n\ge 5$. Then by Theorem \ref{Betti splitting} and by the induction hypothesis,
{\small
\begin{align*}
    &\beta_{i,i+2}(R/I_2(P_n^c))\\
    &=\beta_{i,i+1}(R/I_1(K_{n-2}^x))+\beta_{i,i+1}(R/I_2(P_{n-1}^c))+\beta_{i-1,i}(R/I_2(P_{n-1}^c))\\
    &=i\binom{n-1}{i+1}-\binom{n-3}{i-1}+\binom{i+1}{2}\binom{n-1}{i+2}-i\binom{n-3}{i}+\binom{i}{2}\binom{n-1}{i+1}-(i-1)\binom{n-3}{i-1}\\
    &=\binom{i+1}{2}\binom{n}{i+2}-i\binom{n-2}{i}.
\end{align*}
}

We now take $r\ge 3$ and prove the formula by induction on $n$ and $r$. By Theorem \ref{general r case},
\[
I_r(P_n^c)=x_1J_1+J_2,
\]
where $J_1=I_{r-1}(K_{n-2}^x)$ and $J_2=I_r(P_{n-1}^c)$. Note that for a fixed $r$, $I_r(P_n^c)=\langle 0\rangle$ for $n=1,2,\ldots,r$. If $n=r+1$, then $I_r(P_n^c)= \langle \prod_{i=1}^{r+1}x_i \rangle$. Hence $\beta_{i,i+r}(R/I_r(P_{n}^c))=\binom{i+r-1}{r}\binom{r+1}{i+r}$ since $$\binom{i+r-1}{r}\binom{r+1}{i+r}=\begin{cases}
    1&\text{ if }i=1\\
    0& \text{ otherwise.}
\end{cases}$$ Now let $n\ge r+2$. Then by Theorem \ref{Betti splitting}, Lemma \ref{lemma for path complement} and by the induction on $n$ and $r$,
\begin{align*}
     &\beta_{i,i+r}(R/I_r(P_n^c))\\
      &=\beta_{i,i+r-1}(R/I_{r-1}(K_{n-2}^x))+\beta_{i,i+r}(R/I_r(P_{n-1}^c))+\beta_{i-1,i+r-1}(R/I_r(P_{n-1}^c))\\
      &=\binom{i+r-2}{r-1}\binom{n-1}{i+r-1}+\binom{i+r-1}{r}\binom{n-1}{i+r}+\binom{i+r-2}{r}\binom{n-1}{i+r-1}\\
      &=\binom{i+r-1}{r}\binom{n}{i+r}.
\end{align*}
 \end{proof}

  \begin{example}
   Let $G=P_7^c$. Then the Betti tables of $R/I_2(P_7^c)$ and 
$R/I_3(P_7^c)$ computed using Theorem \ref{complete graph case} are given as 
in Figure \ref{figurebettitable}.
    \begin{figure}[!ht]\
\centering
\begin{tikzpicture}
[scale=.45]
\draw [fill] (1,7) circle [radius=0.01];
\draw [fill] (1,8.5) circle [radius=0.01];
\draw (1,7)--(13,7);
\draw (1,8.5)--(13,8.5);
\node at (3,7.8){$0$};
\node at (4.4,7.8){$1$};
\node at (6.2,7.8){$2$};
\node at (8.3,7.8){$3$};
\node at (10.5,7.8){$4$};
\node at (12.7,7.8){$5$};
\node at (1.5,6.1){$0:$};
\node at (1.5,4.4){$1:$};
\node at (1.5,2.6){$2:$};
\node at (3,6){$1$};
\node at (4.4,6){$\cdot$};
\node at (6.2,6){$\cdot$};
\node at (8.3,6){$\cdot$};
\node at (10.5,6){$\cdot$};
\node at (12.7,6){$\cdot$};
\node at (3,4.3){$\cdot$};
\node at (4.4,4.3){$\cdot$};
\node at (6.2,4.3){$\cdot$};
\node at (8.3,4.3){$\cdot$};
\node at (10.5,4.3){$\cdot$};
\node at (12.7,4.3){$\cdot$};
\node at (3,2.5){$\cdot$};
\node at (4.4,2.5){$30$};
\node at (6.2,2.5){$85$};
\node at (8.3,2.5){$96$};
\node at (10.5,2.5){$50$};
\node at (12.7,2.5){$10$};
\draw [fill] (16,7) circle [radius=0.01];
\draw [fill] (16,8.5) circle [radius=0.01];
\draw (16,7)--(27,7);
\draw (16,8.5)--(27,8.5);
\node at (18,7.8){$0$};
\node at (19.4,7.8){$1$};
\node at (21.2,7.8){$2$};
\node at (23.3,7.8){$3$};
\node at (25.5,7.8){$4$};
\node at (16.5,6.1){$0:$};
\node at (16.5,4.4){$1:$};
\node at (16.5,2.6){$2:$};
\node at (18,6){$1$};
\node at (19.4,6){$\cdot$};
\node at (21.2,6){$\cdot$};
\node at (23.3,6){$\cdot$};
\node at (25.5,6){$\cdot$};
\node at (18,4.3){$\cdot$};
\node at (19.4,4.3){$\cdot$};
\node at (21.2,4.3){$\cdot$};
\node at (23.3,4.3){$\cdot$};
\node at (25.5,4.3){$\cdot$};
\node at (18,2.5){$\cdot$};
\node at (19.4,2.5){$\cdot$};
\node at (21.2,2.5){$\cdot$};
\node at (23.3,2.5){$\cdot$};
\node at (25.5,2.5){$\cdot$};
\node at (16.5,0.8){$3:$};
\node at (18,0.7){$\cdot$};
\node at (19.4,0.7){$35$};
\node at (21.2,0.7){$84$};
\node at (23.3,0.7){$70$};
\node at (25.5,0.7){$20$};
\end{tikzpicture}
\caption{Betti table of $R/I_2(P_7^c)$ and $R/I_3(P_7^c)$, respectively}\label{figurebettitable}
\end{figure}
 \end{example}

 \begin{remark}
     Note that if we compare the formulas for the graphs $P_n^c$, $K_n$ and $K_{1,n}$, for $r\ge 3$, we have $\beta_{i,i+r}(R/I_r(P_n^c))=\beta_{i,i+r}(R/I_r(K_n))= \beta_{i,i+r+1}(R/I_r(K_{1,n}))$ for all $i \geq 0$.
 \end{remark}

 \begin{remark}
     Note that for $r\ge 3$, $I_r(P_n^c)=I_r(K_n)$. However, for $1\le r<3$ they are not the same ideal. Thus for $r=1,2$, we can calculate the Betti numbers $\beta_{i,i+r}(R/I_r(P_n^c))$ using \Cref{alternate formula} and obtain the following two combinatorial identities.
     \begin{align*}
         i\binom{n-1}{i+1}&=n\binom{n-1}{i}-\binom{n}{i+1}-(n-1)\binom{n-2}{i-1}\\
         \binom{i+1}{2}\binom{n}{i+2}-i\binom{n-2}{i}&=\binom{n}{i+2}-n\binom{n-1}{i+1}+\binom{n}{2}\binom{n-2}{i}-(n-2)\binom{n-3}{i-1}.
     \end{align*}
 \end{remark}


\section{Fr\"oberg's theorem via collapsibility}\label{collapsibility}

In this section we use the concept of $d$-collapsibility from topological combinatorics to give an alternative proof of \Cref{maintheorem}. The notion of $d$-collapsibility was introduced by Wegner in \cite{wegner}.

\begin{definition}
Let $\Delta$ be a simplicial complex. A face $\sigma \in \Delta$ is called {\it d-collapsible} if there is only one facet $\tau=\tau(\sigma)$ in $\Delta$ containing $\sigma$ (possibly $\tau=\sigma$), and moreover dim$(\sigma)\le d-1$. In this case $(\sigma, \tau)$ is called a {\it free pair} and we say that the complex $\Delta$ {\it elementary d-collapses} onto the subcomplex $\Delta'=\Delta\setminus \{\gamma\in \Delta : \sigma \subseteq \gamma\}$. We denote this collapsing by $\Delta\searrow \Delta'$. Complex $\Delta$ is called {\it d-collapsible} if there is a sequence $$\Delta \searrow \Delta_1 \searrow \Delta_2\searrow \dots \searrow \emptyset,$$
of elementary d-collapses ending with the empty complex. Note that if a complex $\Delta$ is $d$-collapsible for some integer $d\geq 1$, then it is $r$-collapsible for any integer $r\geq d$.
\end{definition}

A simplicial complex $\Delta$ is called {\it $d$-Leray} if $\tilde{H}_i(Y;\mathbb{Z})=0$ for all induced subcomplexes $Y \subseteq \Delta$ and for all $i\geq d$. The {\it Leray number} of $\Delta$, denoted $L(\Delta)$, is the minimal $d$ such that $\Delta$ is $d$-Leray. From Hochster's formula \cite{Hoch77}, we know that 
\begin{equation}\label{relation_reg_and_leray}
    {\rm reg}(I_{\Delta})=L(\Delta)+1.
\end{equation}

We know that the ideal $I_r(G)$ has a linear resolution if and only if reg$(I_r(G))=r+1$ \cite[Lemma 49]{AVTBook}).  Thus, to prove \Cref{maintheorem}, it is enough to show that reg$(I_r(G))=r+1$. We do this by showing that the complex $\Indr{r}(G)$ is $r$-Leray (recall that $I_r(G)=I_{\Indr{r}(G)}$) for any $r\geq 1$ and for any co-chordal graph $G$, which along with \Cref{relation_reg_and_leray}, will prove \Cref{maintheorem}. 
{ For more discussion on the Leray number of a complex $\Delta$ and the regularity of its Stanley-Reisner ideal $I_\Delta$, the reader is referred to \cite{KM}.}

Wegner \cite{wegner} proved that every $d$-collapsible complex is $d$-Leray. However, the converse is not true (see \cite{MT} for examples). Hence, the following result along with the discussion above gives an alternate proof of \Cref{maintheorem}.

\begin{theorem}\label{theorem:chordal collapsing}
Let $G$ be a finite simple graph and let $r\geq 1$ be an integer. If $G$ is the complement of a chordal graph, then ${\Indr{r}(G)}$ is $r$-collapsible.
{In particular, 
$I_r(G)$ has a $(r+1)$-linear resolution.}
\end{theorem}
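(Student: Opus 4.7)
The plan is to prove $\Indr{r}(G)$ is $r$-collapsible by induction on $|V(G)| + r$. The base cases with $|V(G)| \le r$ are immediate, since then $\Indr{r}(G)$ is the full simplex on $V(G)$ and is trivially $r$-collapsible. For the inductive step, since $G^c$ is chordal I would pick a simplicial vertex $x$ of $G^c$ and write $V(G) = \{x\} \sqcup Y \sqcup W$ exactly as in the construction preceding Lemma \ref{aux lemma 1}, where $Y = N_{G^c}(x)$ and $W = N_G(x)$.

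I would then analyze the deletion $D := \mathrm{del}_{\Indr{r}(G)}(x)$ and the link $L := \mathrm{lk}_{\Indr{r}(G)}(x)$ separately. The deletion equals $\Indr{r}(G \setminus x)$, and because $(G \setminus x)^c = G^c \setminus x$ is still chordal, the inductive hypothesis on $|V(G)|$ gives that $D$ is $r$-collapsible. The link is more delicate: I claim $L = \Indr{r-1}(\widetilde{G})$ for $r \ge 2$, where $\widetilde{G}$ is the co-chordal graph built in Lemma \ref{aux lemma 1}. The crucial graph-theoretic point is that, for any $S \subseteq V(G) \setminus \{x\}$, the connected component of $x$ in $G[\{x\} \cup S]$ equals $\{x\} \cup C_x(S)$, where
\[
C_x(S) = (W \cap S) \cup \bigl\{\, y \in Y \cap S : N_G(y) \cap (W \cap S) \neq \emptyset \,\bigr\},
\]
and this $C_x(S)$ coincides exactly with the unique non-trivial component of $\widetilde{G}[S]$, because in $\widetilde{G}$ the set $W$ becomes a clique while $Y$ stays independent. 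The remaining components of $\widetilde{G}[S]$ are isolated vertices in $Y$ of size $1$, which are harmless when $r \ge 2$. The case $r = 1$ is handled separately: the link is then the full simplex on $Y$, which is trivially $0$-collapsible. In either case the inductive hypothesis gives that $L$ is $(r-1)$-collapsible.

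To combine the two pieces, I would invoke the standard lifting argument. Maximal faces of $L$ correspond bijectively to facets of $\Indr{r}(G)$ containing $x$ via $T \mapsto \{x\} \cup T$, so every elementary $(r-1)$-collapse $(\sigma, \tau)$ of $L$ lifts to an elementary $r$-collapse $(\{x\} \cup \sigma,\, \{x\} \cup \tau)$ of $\Indr{r}(G)$, and this bijection persists after each elementary collapse. Performing all lifted collapses in the order dictated by the $(r-1)$-collapsing of $L$ removes every face containing $x$, leaving precisely $D$, which the inductive hypothesis then $r$-collapses to the empty complex. The linear-resolution conclusion for $I_r(G)$ follows at once from Wegner's theorem ($r$-collapsible implies $r$-Leray) together with equation \eqref{relation_reg_and_leray}. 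The hardest part is the identification $L = \Indr{r-1}(\widetilde{G})$, which beyond the component calculation above requires checking that the condition \emph{``every component of $G[S]$ has at most $r$ vertices''} is automatically implied by the stronger bound $|C_x(S)| \le r - 1$, so that the transfer from $G$ to $\widetilde{G}$ loses no information.
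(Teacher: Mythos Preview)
Your proof is correct and takes a genuinely different route from the paper's. You argue by induction on $|V(G)|+r$, splitting $\Indr{r}(G)$ at a simplicial vertex $x$ of $G^c$ into the deletion $D=\Indr{r}(G\setminus x)$ and the link $L$, identifying $L=\Indr{r-1}(\widetilde G)$ (this is the collapsibility analogue of step~(iii) in the proof of Theorem~\ref{general r case}), and then lifting an $(r-1)$-collapsing of $L$ to an $r$-collapsing of $\Indr{r}(G)$ onto $D$. Your approach thus mirrors the vertex-splittable argument of Section~\ref{vertex spllitable} and is self-contained: it even reproves the $r=1$ case within the induction rather than citing Wegner. The paper instead gives a direct, global collapsing sequence. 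Using only the absence of induced $4$-cycles in $G^c$, it shows that every face of $\Indr{r}(G)$ has at most one non-singleton connected component, and then collapses the connected faces of size $r, r-1,\ldots,2$ in turn, landing on $\Indr{1}(G)$; full chordality of $G^c$ is invoked only at the very end, to quote Wegner's result that $\Indr{1}(G)$ is $1$-collapsible. The paper's decomposition therefore isolates the weaker hypothesis ``$G^c$ has no induced $C_4$'' as already sufficient for the collapse $\Indr{r}(G)\searrow\Indr{1}(G)$, and this is exactly what is reused in Proposition~\ref{converseCondition} and Corollary~\ref{cor.conversefalse}. Your inductive approach, while cleaner and structurally parallel to Section~\ref{vertex spllitable}, uses a perfect elimination ordering from the outset and does not separate these two ingredients.
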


 \begin{proof}
It is known \cite{wegner} that the complex $\Indr{1}(G)$ is $1$-collapsible for any co-chordal graph $G$ and hence $\Indr{1}(G)$ is $r$-collapsible for any $r\ge 1$. Therefore, we may assume that $r\ge 2$. 

Let $\Delta=\Indr{r}(G)$ and $F$ be a face of $\Delta$. By definition, each connected component of the induced subgraph $G[F]$ has at most $r$ vertices. Let $\mathcal C_1$ and $\mathcal C_2$ be two connected components of $G[F]$. We first show that either $|V(\mathcal C_1)|=1$ or $|V(\mathcal C_2)|=1$. On the contrary, let $v_1,v_2\in V(\mathcal C_1)$ and $v_3,v_4\in V(\mathcal C_2)$ such that $\{v_1,v_2\}$ and $\{v_3,v_4\}$ are edges in the graph $G$. In that case, $v_1v_4v_2v_3v_1$ becomes an induced cycle of length four in the chordal graph $G^c$, which is a contradiction. Hence, if $F$ is a face of $\Delta$ such that $|F|\ge r$, then $F=F'\sqcup F''$, where both $F'$ and $F''$ are faces of $\Delta$ such that $|F'|\le r$, the induced subgraph $G[F']$ is connected and the induced subgraph $G[F'']$ consists of only isolated vertices. Moreover, no vertex in $F'$ is connected to any vertex in $F''$ by an edge. Now we proceed to show that $\Delta$ is $r$-collapsible.

It is easy to see that $\Indr{1}(G)\subseteq \Delta$. Here, we show that the complex $\Delta$ can be collapsed onto $\Indr{1}(G)$ using elementary $r$-collapses. We know that if $F$ is a face of $\Delta$ such that $|F|\ge r$, then the induced subgraph $G[F]$ has at most one connected component of cardinality more than one (and at most $r$). To collapse $\Delta$ onto $\Indr{1}(G)$, it is enough to collapse all the faces in which the cardinality of the connected component is more than one. We do these collapses in the decreasing order of cardinality of the connected components, that is, we first collapse those faces in which the connected component has vertex cardinality $r$. 

Let $F$ be a face of $\Delta$ such that $G[F]$ is connected and $|F|=r>1$. 
To show that $F$ is $r$-collapsible, it is enough to show that $F$ is contained in a unique facet. 
On the contrary, let $F_1$ and $F_2$ be two facets of $\Delta$ such that $F\subseteq F_1 \cap F_2$. 
In that case if $F_1'=F_1\setminus F$ and $F_2'=F_2\setminus F$ then the induced subgraphs $G[F_1']$ and $G[F_2']$ consists of only isolated vertices. 
Since $F_1$ and $F_2$ are different facets, $F_2'\setminus F_1' \neq \emptyset$ and $F_1'\setminus F_2' \neq \emptyset$. If there is no edge in the induced subgraph $G[F_1'\cup F_2']$, then $F_1\cup F_2$ is a face of $\Delta$ which contradicts the fact that $F_1$ and $F_2$ are facets. 
Therefore, let $v \in F_2'\setminus F_1'$ such that $v$ is connected to some $w\in F_1'$. Then $vu_1wu_2v$ forms an induced cycle of length $4$ in the graph $G^c$, where $u_1,u_2\in F$ such that $\{u_1,u_2\}\in E(G)$. 
{Note that such an edge exists since $G[F]$ is a connected graph on at least two vertices.} This gives us a contradiction since $G^c$ is assumed to be chordal. Thus if $F\in \Delta$ such that $|F|=r$ and $G[F]$ is connected, then $F$ is contained in a unique facet. Moreover, note that if $F,F'$ are two different faces in $\Delta$ with $|F|=|F'|=r$ and the induced subgraphs $G[F]$ and $G[F']$ are both connected such that $F_1$ and $F_1'$, are the unique facets in $\Delta$ containing $F$ and $F'$, respectively, then $F_1\neq F_1'$. Indeed, if $F_1=F_1'$ then either $G[F\cup F']$ is contained in a connected component $\mathcal C$ of $G[F_1]$, or there exists two connected components $\mathcal C_1$ and $\mathcal C_2$ of $G[F_1]$ such that $\mathcal C_1$ and $\mathcal C_2$ contains $G[F]$ and $G[F']$, respectively. In the first case we have a contradiction since $|F\cup F'|>r$. In the second case, $G[F_1]$ has at least two connected components of vertex cardinality more than one, again a contradiction.

After collapsing all the faces $F\in \Delta$ such that $G[F]$ is connected and $|F|=r$, we do the same with faces $F'\in \Delta$ such that $|F'|=r-1>1$ and $G[F']$ is connected. Continuing this way, let $\Delta_i$ be the simplicial complex obtained from $\Delta$ by collapsing all $F\in\Delta$ such that $|F|>i$ and $G[F]$ are connected. Note that if $\sigma\in\Delta_i$, then $\sigma$ is also a face of $\Delta$ and hence $\sigma=\sigma'\sqcup\sigma''$, where both $\sigma'$ and $\sigma''$ are faces of $\Delta_i$, $|\sigma'|\le i$ where the induced subgraph $G[\sigma']$ is connected and the induced subgraph $G[\sigma'']$ consists of only isolated vertices. Moreover, no vertex in $\sigma'$ is connected to any vertex in $\sigma''$ by an edge. Proceeding as above we see that if $\sigma$ is a face of $\Delta_i$ such that $G[\sigma]$ is connected and $|\sigma|=i>1$, then $\sigma$ is contained in a unique facet of $\Delta_i$. Moreover, if $\sigma,\sigma'\in\Delta_i$ with $|\sigma|=|\sigma'|=i$ and the induced subgraphs $G[\sigma]$ and $G[\sigma']$ are both connected such that $\sigma_1$ and $\sigma_1'$ are the unique facets containing $\sigma$ and $\sigma'$, respectively, then $\sigma_1\neq\sigma_1'$. Thus each face of $\Delta_i$ of cardinality $i$ whose corresponding induced subgraph is connected, is $r$-collapsible. We continue these collapses till we collapse every face $\sigma\in \Delta$ such that $|\sigma|>1$ and $G[\sigma]$ are connected. These collapses are done in the decreasing order of the cardinality of $\sigma$. Observe that the remaining complex is $\Indr{1}(G)$ which is $1$-collapsible. This completes the proof of \Cref{theorem:chordal collapsing}.
\end{proof}

The $r$-independence complexes ($r\geq 1$) of graphs we encountered in \Cref{gap free corollary} are $r$-collapsible as we now show.

\begin{corollary}
    Let $G$ be a gap-free and claw-free graph such that it contains a leaf. Then $\mathrm{Ind}_r(G)$ and $\mathrm{Ind}_r(G^c)$ are $r$-collapsible for any integer $r\ge 1$.
\end{corollary}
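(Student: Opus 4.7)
The plan is to reduce this immediately to \Cref{theorem:chordal collapsing} via the structural decomposition already obtained in the proof of \Cref{gap free corollary}. Recall that in that proof, starting from a leaf $x\in V(G)$ with $N_G(x)=\{y\}$, two cases were analyzed (either $\deg_G(y)=1$, or $N_G(y)=\{x,y_1,\dots,y_t\}$ with $t\geq 1$) and in both cases the vertex set $V(G)$ was partitioned as $V(G)=Y\sqcup W$ where $G[Y]$ is a clique and $W$ is an independent set in $G$.

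First, I would invoke this decomposition verbatim from the proof of \Cref{gap free corollary}; there is nothing new to show at this step. Next, I would apply \Cref{proposition 1} to the partition $V(G)=Y\sqcup W$, which tells us that both $G$ and $G^c$ are chordal. In particular, $G$ is the complement of the chordal graph $G^c$, and $G^c$ is the complement of the chordal graph $G$, so both $G$ and $G^c$ are co-chordal.

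Finally, I would apply \Cref{theorem:chordal collapsing} twice: once to $G$, concluding that $\mathrm{Ind}_r(G)$ is $r$-collapsible for every $r\ge 1$, and once to $G^c$, concluding that $\mathrm{Ind}_r(G^c)$ is $r$-collapsible for every $r\ge 1$. This finishes the proof.

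There is essentially no obstacle here beyond bookkeeping: the combinatorial content (the $Y\sqcup W$ decomposition and its chordality consequences) has already been established in \Cref{proposition 1} and \Cref{gap free corollary}, and the topological content (co-chordal implies $r$-collapsible for $\mathrm{Ind}_r$) is exactly the content of \Cref{theorem:chordal collapsing}. The only thing worth emphasizing in writing is the symmetric role of $G$ and $G^c$, since the partition property is self-dual (swapping $Y$ and $W$ takes $G$ to $G^c$), which is precisely what allows the same theorem to apply to both complexes.
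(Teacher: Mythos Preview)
Your proposal is correct and follows essentially the same approach as the paper: reduce to the structural decomposition from \Cref{gap free corollary} (via \Cref{proposition 1}) to conclude that both $G$ and $G^c$ are chordal, and then apply \Cref{theorem:chordal collapsing} to each. The paper's proof is simply a two-sentence compression of exactly this argument.
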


\begin{proof}
    Proceeding as in Corollary \ref{gap free corollary}, we have both $G$ and $G^c$ chordal. Therefore, by the proof of Theorem \ref{theorem:chordal collapsing}, both $\mathrm{Ind}_r(G)$ and $\mathrm{Ind}_r(G^c)$ are $r$-collapsible.
\end{proof}

We know that if $G=C_n^c$, where $n\ge 4$, then $I_{\Indr{1}(G)}$ does not have a linear resolution by Fr\"oberg's theorem, since $G^c = C_n$ is not a chordal
graph for $n \geq 4$. However, in what follows we show that $I_{\Indr{r}(C_n^c)}$ has a linear resolution for each $r\ge 2$. 
{Consequently, the converse
of Theorem \ref{maintheorem} cannot hold.}
We prove that $I_{\Indr{r}(G)}$ 
has a linear resolution as a corollary of the following result.

\begin{proposition}\label{converseCondition}
Let $G$ be a graph so that $G^c$ does not have an induced cycle of length $4$. If $\mathrm{dim}\,\Indr{1}(G)\le r-1$, then $I_{\Indr{r}(G)}$ has a $(r+1)$-linear resolution. 
\end{proposition}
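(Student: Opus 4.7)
The plan is to adapt the proof of \Cref{theorem:chordal collapsing} so as to show that $\Indr{r}(G)$ is $r$-collapsible. From this, Wegner's theorem gives that $\Indr{r}(G)$ is $r$-Leray, and then \eqref{relation_reg_and_leray} forces $\reg(I_{\Indr{r}(G)})\le r+1$; since the generators of $I_{\Indr{r}(G)}$ have degree $r+1$, a $(r+1)$-linear resolution follows.

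First, I would re-examine the collapsing sequence constructed in the proof of \Cref{theorem:chordal collapsing}, which reduces $\Indr{r}(G)$ by elementary $r$-collapses down to the subcomplex $\Indr{1}(G)$. The two structural inputs used at each stage are:
\begin{enumerate}
\item[(a)] the decomposition $F=F'\sqcup F''$ for any face $F$ with $|F|\ge r$, where $G[F']$ is a single connected component of size at most $r$, $G[F'']$ consists of isolated vertices, and no edge of $G$ joins $F'$ to $F''$;
\item[(b)] the uniqueness of the facet containing a face $F$ with $|F|=i\ge 2$ and $G[F]$ connected, in each intermediate complex.
\end{enumerate}
Inspecting the original argument, every obstruction produced in verifying (a) or (b) takes the form of an induced $4$-cycle in $G^c$. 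Hence the weaker hypothesis that $G^c$ is $C_4$-free, which is exactly what we have, is already sufficient to carry out the same collapsing procedure verbatim, ending at $\Indr{1}(G)$.

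Second, I would show that $\Indr{1}(G)$ is itself $r$-collapsible. In \Cref{theorem:chordal collapsing} this step was supplied by Wegner's $1$-collapsibility of $\Indr{1}(G)$ for chordal $G^c$, which is not available to us. Instead, the hypothesis $\dim\Indr{1}(G)\le r-1$ guarantees that every facet $\tau$ of $\Indr{1}(G)$ has at most $r$ vertices, so $(\tau,\tau)$ is a valid free pair for an elementary $r$-collapse. Removing facets one at a time (the dimension of the remaining complex stays bounded by $r-1$ throughout) collapses $\Indr{1}(G)$ to the empty complex.

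Combining the two steps yields $r$-collapsibility of $\Indr{r}(G)$, and the linear resolution statement follows as above. The main obstacle I anticipate is the bookkeeping of the first step: verifying that the structural properties (a) and (b) persist through each intermediate complex $\Delta_i$, and confirming that every appeal to chordality in the proof of \Cref{theorem:chordal collapsing} can indeed be downgraded to the absence of an induced $C_4$ in $G^c$. Once that is in place, the remainder is a routine assembly of two standard topological-combinatorics ingredients.
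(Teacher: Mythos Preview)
Your proposal is correct and follows essentially the same approach as the paper: observe that every contradiction produced in the collapsing argument of \Cref{theorem:chordal collapsing} is an induced $C_4$ in $G^c$, so the $C_4$-free hypothesis suffices to collapse $\Indr{r}(G)$ down to $\Indr{1}(G)$, and then the dimension bound $\dim\Indr{1}(G)\le r-1$ lets you $r$-collapse the remaining complex to the empty complex. The paper's proof is a two-sentence sketch of exactly this; your write-up is simply a more explicit version of the same reasoning.
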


\begin{proof}
Following the proof of Theorem \ref{theorem:chordal collapsing} we see that if $G^c$ does not have an induced $4$-cycle then for each $r\ge 2$, $\Indr{r}(G)$ is $r$-collapsible to $\Indr{1}(G)$. Since $\mathrm{dim}\,\Indr{1}(G)\le r- 1$ we see that $\Indr{r}(G)$ is $r$-collapsible to the empty complex. Therefore, $\Indr{r}(G)$ is $r$-Leray. Consequently, $I_{\Indr{r}(G)}$ has a linear resolution.
\end{proof}

\begin{corollary}\label{cor.conversefalse}
For $n\geq 4$, the ideal $I_{\Indr{r}(C_n^c)}$ has a linear resolution for each $r\ge 2$.
\end{corollary}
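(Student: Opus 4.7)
The plan is to deduce Corollary \ref{cor.conversefalse} directly from Proposition \ref{converseCondition} applied to $G=C_n^c$. For the proposition to apply, one needs to verify that (i) $(C_n^c)^c=C_n$ has no induced $4$-cycle, and (ii) $\dim\Indr{1}(C_n^c)\le r-1$ for every $r\ge 2$. The first condition obviously fails when $n=4$, since $C_4$ is itself a $4$-cycle, so I will split the argument into the cases $n\ge 5$ and $n=4$.

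For $n\ge 5$, any four vertices of $C_n$ induce a disjoint union of paths (as at most three edges of $C_n$ can lie among any four of its vertices once $n\ge 5$), so no induced $C_4$ can occur in $C_n$, verifying hypothesis (i). For (ii), note that the independent sets of $C_n^c$ are precisely the cliques of $C_n$; since $n\ge 4$, the maximal cliques of $C_n$ are its edges, so $\Indr{1}(C_n^c)$ is $1$-dimensional and in particular $\dim\Indr{1}(C_n^c)=1\le r-1$ for every $r\ge 2$. With both hypotheses satisfied, Proposition \ref{converseCondition} yields the desired $(r+1)$-linear resolution of $I_{\Indr{r}(C_n^c)}$.

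The case $n=4$ must be handled separately, since Proposition \ref{converseCondition} is not directly applicable. Here $C_4^c$ is the disjoint union of the two edges $\{x_1,x_3\}$ and $\{x_2,x_4\}$, so any induced subgraph on three or more vertices of $C_4^c$ is disconnected (either an isolated vertex is left over in a $3$-subset, or one takes the whole vertex set on which the graph is itself disconnected). Consequently, $I_{\Indr{r}(C_4^c)}=\langle 0\rangle$ for every $r\ge 2$, and the zero ideal vacuously has a linear resolution. No substantive obstacle is anticipated: the argument is essentially a routine check of the hypotheses of Proposition \ref{converseCondition}, combined with a small bookkeeping case at $n=4$.
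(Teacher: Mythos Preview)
Your proposal is correct and follows essentially the same route as the paper: the paper also splits off the case $n=4$ (where the ideal is zero) and for $n\ge 5$ invokes Proposition~\ref{converseCondition} using $\dim\Indr{1}(C_n^c)=1$. Your write-up is simply more explicit in verifying the hypotheses (no induced $4$-cycle in $C_n$ for $n\ge 5$, and the description of $C_4^c$), which the paper leaves implicit.
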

\begin{proof}
For $n\le 4$, $I_{\Indr{r}(C_n^c)}$ is the zero ideal when $r\ge 2$ and hence has a linear resolution. For $n\ge 5$, note that $\mathrm{dim}\,\Indr{1}(C_n^c)= 1$ and hence the result follows from Proposition \ref{converseCondition}.
\end{proof}

\section{Concluding Remarks}\label{conrem}
As stated in the Introduction, determining a combinatorial description of higher degree square-free monomial ideals that have a linear resolution over all fields is an active area of research. 
A prominent setting to undertake such a study is that of hypergraphs, since they generalize graphs and the edge ideal of a hypergraph is a square-free monomial ideal. 
Inspired by Fr\"oberg's theorem, there were attempts to generalize the notion of \emph{chordality} to hypergraphs and then prove that the edge ideal of the complement hypergraph has a linear resolution. 
A recent approach is by Bigdeli, Yazdan Pour and Zaare-Nahandi \cite{BYZ}. 
The authors first introduce the notion of chordal hypergraphs (by generalizing the perfect elimination order of chordal graphs) and then show that the edge ideal associated with the complement hypergraph has a linear resolution over any field. 
They further show that this particular class of chordal hypergraphs contains several, previously defined classes of chordal hypergraphs. 
More recently, Bigdeli and Faridi \cite{BF} have extended this notion of chordality to the realm of simplicial complexes. 

Let $\mathcal{H}$ be an $(r+1)$-uniform hypergraph (i.e., every edge is of cardinality $r+1$) on the vertex set $V$.
A subset $W\subset V$ is called a \emph{simplical maximal subcircuit} if $|W| = r$, it is contained in an edge, and its neighborhood is a clique (see \cite[Definition 1.2]{BYZ} for details). 
Now a \emph{chordal hypergraph} is recursively defined as the $(r+1)$-uniform hypergraph $\mathcal{H}$ which is either empty or contains a simplicial maximal subcircuit $W$ such that the deletion $\mathcal{H}\setminus W$ is also chordal. 
Further, a hypergraph is \emph{co-chordal} if its complement is chordal. 
We have already seen in the Introduction that the notion of $r$-independence lets us define a hypergraph, which we denote by $\mathrm{Con}_r(G)$. 
Based on our calculations we propose the following conjecture: 

\begin{conjecture}
For all $r\geq 1$, $\mathrm{Con}_r(G)$ is a co-chordal hypergraph if $G$ is a co-chordal graph.
\end{conjecture}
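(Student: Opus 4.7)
I would proceed by induction on $|V(G)|$, in close analogy with the proof of Theorem \ref{general r case}. For the base case $|V(G)| \le r+1$, the hypergraph $\mathrm{Con}_r(G)$ has at most one possible edge, so its complement is either empty or a single edge, and is vacuously a chordal hypergraph in the sense of \cite{BYZ}. For the inductive step, fix a simplicial vertex $x_1$ of $G^c$ and decompose $V(G) = \{x_1\} \sqcup Y \sqcup W$ exactly as in Section \ref{vertex spllitable}, where $Y = N_{G^c}(x_1)$ so that $\{x_1\} \cup Y$ is an independent set in $G$, and $W = N_G(x_1)$.

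The aim is to exhibit a simplicial maximal subcircuit $U$ of $\mathrm{Con}_r(G)^c$ whose removal leaves a hypergraph of the form $\mathrm{Con}_r(G')^c$ for some smaller cochordal graph $G'$. The natural candidate is an $r$-subset $U \subseteq \{x_1\} \cup Y$ containing $x_1$, assuming $|Y| \ge r-1$ (the opposite case being small enough to handle separately). Since $U$ induces an independent subgraph of $G$, one immediately gets that $U \cup \{v\}$ is an edge of $\mathrm{Con}_r(G)^c$ whenever $v$ fails to be adjacent to every element of $U$, so $U$ is a maximal subcircuit. The main verification is the simpliciality condition: given $v_1,v_2$ such that $U \cup \{v_i\}$ is an edge of $\mathrm{Con}_r(G)^c$ for $i=1,2$, every $(r+1)$-subset of $U \cup \{v_1,v_2\}$ must induce a disconnected subgraph of $G$. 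Any violation would provide a connected $(r+1)$-vertex induced subgraph from which one can extract an induced $4$-cycle in $G^c$ involving $x_1$ and an element of $Y$, contradicting the simpliciality of $x_1$ in $G^c$ via exactly the kind of argument used in Proposition \ref{proposition 1} and Theorem \ref{theorem:chordal collapsing}.

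The remaining difficulty, and the main obstacle, is to match the BYZ-style deletion $\mathrm{Con}_r(G)^c \setminus U$ with a hypergraph of the form $\mathrm{Con}_r(G')^c$ for some cochordal $G'$ on fewer vertices. Hypergraph deletion only removes edges containing $U$, whereas our goal requires the entire remaining hypergraph to reflect a single graph operation $G \rightsquigarrow G'$ (e.g.\ $G' = G \setminus y$ for a carefully chosen $y \in Y$, or $G' = \widetilde{G}$ from Lemma \ref{aux lemma 1}). I anticipate that a single peeling step is too coarse: one likely needs to peel off a carefully ordered family of simplicial maximal subcircuits, parametrised by the elements of $Y$, and track the cumulative effect of this batch of deletions before matching it against $\mathrm{Con}_r(G')^c$. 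Once that bookkeeping is set up — with $\widetilde{G}$ of Lemma \ref{aux lemma 1} as the natural inductive target, since it is cochordal on one fewer vertex — induction on $|V(G)|$ (with a secondary induction on $r$, as in Theorem \ref{general r case}) should close the argument.
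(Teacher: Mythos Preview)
The statement you are addressing is listed in the paper as an open \emph{conjecture} in the concluding remarks; the authors do not supply a proof, so there is no argument in the paper to compare your proposal against.

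Regarding the proposal itself: it is a strategy sketch, not a proof, and you correctly identify the main obstruction yourself --- matching the BYZ deletion $\mathrm{Con}_r(G)^c \setminus U$ with $\mathrm{Con}_r(G')^c$ for some smaller cochordal $G'$ is precisely the missing ingredient, and nothing in the paper (in particular neither Theorem~\ref{general r case} nor Lemma~\ref{aux lemma 1}) supplies it. There is also a gap earlier: your simpliciality verification only treats $(r+1)$-subsets of $U \cup \{v_1,v_2\}$ for a pair $v_1,v_2 \in N(U)$, but the BYZ clique condition requires \emph{every} $(r+1)$-subset of the full closed neighbourhood $N[U]$ to be a non-edge of $\mathrm{Con}_r(G)$. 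Since $N(U)$ can contain many vertices of $W$ (any $w\in W$ not adjacent in $G$ to some $y \in U\cap Y$), you must rule out connected $(r+1)$-subsets lying entirely inside $Y \cup W$, and those are not governed by the simpliciality of $x_1$ in $G^c$. So both the peeling step and the simpliciality step need substantially more work before this outline becomes a proof of the conjecture.
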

The reader should note that our work allows us to directly show that $I_r(G)$ has a linear resolution without first checking if ${\rm Con}_r(G)$ is chordal or not. 
Moving on, because of \Cref{cor.conversefalse} the following question is worth answering:

\begin{question}
Are there examples of (non-co-chordal) graphs $G$ such that $I_r(G)$  has a linear resolution but the hypergraph $\mathrm{Con}_r(G)$ is not a co-chordal hypergraph?
\end{question}


\end{document}